\documentclass[article,hidelinks,onefignum,onetabnum]{siamart251216}
\usepackage{tikz}
\usetikzlibrary{decorations.pathmorphing}
\usepackage{bbm}
\usepackage[utf8]{inputenc}  
\usepackage{amsrefs}
\usepackage{amssymb,amsmath}
\usepackage{enumerate}
\usepackage[shortlabels]{enumitem}
\usepackage{mathtools}
\usepackage{graphicx}
\DeclareRobustCommand\dotted{\tikz[baseline=-0.6ex]\draw[thick,dotted] (0,0)--(0.54,0);}
\DeclareRobustCommand\dashed{\tikz[baseline=-0.6ex]\draw[thick,dashed] (0,0)--(0.54,0);}

\DeclareRobustCommand\oline{%
\tikz[baseline=-0.6ex]{
  \draw[thick] (0,0)--(0.5,0);
  \filldraw (0.25,0) circle (1.5pt);
}}
\DeclareRobustCommand\chainn{\tikz[baseline=-0.6ex]\draw[thick,dash dot] (0,0)--(0.5,0);}
\usepackage{url}
\usepackage{color}  
\definecolor{darkred}{rgb}{0.6,0,0}
\definecolor{darkgreen}{rgb}{0,0.5,0}
\definecolor{darkmagenta}{rgb}{0.5,0,0.5}
  
\usepackage[capitalize,nameinlink]{cleveref}

\makeatletter
\def\@cite#1#2{\textup{[{#1\if@tempswa , #2\fi}]}}
\DeclareMathOperator{\loc}{loc}
\makeatother

\DeclarePairedDelimiter\abs{\lvert}{\rvert}

   \DeclarePairedDelimiterX\Set[1]\{\}{%
      
      #1
   }

\DeclareMathOperator{\bv}{BV}
\newcommand{\R}{\mathbb R}

\newcommand{\dott}{\, \cdot\,}

 \usepackage[sort]{cite}
\usepackage{bigints}
\newcommand{\Z}{\mathbb{Z}}
\DeclareMathOperator{\lip}{Lip}
\newcommand{\sgn}{\mathop\mathrm{sgn}}
\newcommand{\norma}[1]{{\left\|#1\right\|}}

\renewcommand{\d}[1]{\mathinner{\mathrm{d}{#1}}}
\DeclareMathOperator{\TV}{TV}
\DeclareMathOperator{\lipR}{Lip(\R)}
\newcommand{\D}{\Delta}

\numberwithin{equation}{section}     
\allowdisplaybreaks

\usepackage{lipsum}
\usepackage{amsfonts}
\usepackage{graphicx}
\usepackage{epstopdf}
\usepackage{algorithmic}
\ifpdf
  \DeclareGraphicsExtensions{.eps,.pdf,.png,.jpg}
\else
  \DeclareGraphicsExtensions{.eps}
\fi

\newsiamremark{remark}{Remark}
\newsiamremark{hypothesis}{Hypothesis}
\crefname{hypothesis}{Hypothesis}{Hypotheses}
\newsiamthm{claim}{Claim}
\newsiamremark{fact}{Fact}
\crefname{fact}{Fact}{Facts}

\headers{Stability estimates: memory}{}

\title{Stability estimates for systems of nonlocal balance laws with memory
\thanks{
\funding{This work was  supported by the  University Grants Commission, India NTA under Grant 221610073569 for NKA, AA's Seed Money Grant SM/08/2025-26 and the ARG Matrics Grant ANRF/ARGM/2025/001976/MTR from the Anusandhan National Research Foundation (ANRF), India, as well as GV's INSPIRE Faculty Fellowship (IFA24-MA215) from the Department of Science and Technology (DST), Government of India. The authors
also acknowledge the hospitality of the Department of Mathematics, Indian Institute of Space Science and Technology,
Thiruvananthapuram, where part of this work was completed during AA’s and GV's visit.}}}

\author{
Aekta Aggarwal\thanks{Operations Management and Quantitative Techniques,
Indian Institute of Management Indore\\
Prabandh Shikhar, Rau--Pithampur Road\\
Indore 453556, Madhya Pradesh, India
(\email{aektaaggarwal@iimidr.ac.in}).}
\and
N.~K.~Aswini\thanks{Department of Mathematics\\
Indian Institute of Space Science and Technology\\
Thiruvananthapuram 695547, Kerala, India
(\email{aswinink.23@res.iist.ac.in}, \email{sarvesh@iist.ac.in}).}
\and
Sarvesh Kumar\footnotemark[3]
\and
Ganesh Vaidya\thanks{Department of Mathematics\\
Indian Institute of Science\\
Bangalore 560012, Karnataka, India
(\email{vaidyaganesh@iisc.ac.in}).}
}

\usepackage{amsopn}

\begin{document}\maketitle\begin{abstract}
    In this work, we investigate entropy solutions for a class of systems of nonlocal {balance laws in which the convective flux and the source involves terms where the state variable convolved with kernels} in both spatial and temporal variables. This formulation captures the dependence of the flux on the solution within its spatial neighborhood (spatial nonlocality) as well as on its past states (temporal nonlocality), thereby incorporating memory effects. The resulting systems are coupled through these nonlocal interactions. We establish stability estimates for entropy solutions with respect to perturbations in the flux, the spatial and temporal kernels, and the initial data for the corresponding initial value problems. Finally, we present numerical experiments to illustrate the theoretical results and to highlight the influence of memory and source terms on the solution dynamics.
\end{abstract}
\begin{keywords}
nonlocal conservation laws, traffic flow, convergence rate, hyperbolic systems,memory
\end{keywords}

\begin{MSCcodes}
35L65,65M25, 35D30,  65M12, 65M15
\end{MSCcodes}

\section{Introduction}Conservation laws with nonlocal interactions provide a natural framework for modeling systems in which the evolution at a given point depends not only on the instantaneous state but also on spatial averages. PDEs of the form are known to model such dynamics and read as
\begin{equation}\label{nls1}
\partial_t u +\partial_x \Big(f(u,\int_{\mathbb{R}} u(t,\xi)\mu(x-\xi)\, d\xi ) \Big)=0, \quad (t,x) \in Q_T:=(0,T)\times \R.
\end{equation} Here $\mu$ is a spatial horizon kernel and models applications where the flux at a given point may depend 
not only on the local state but also on averaged quantities over a finite 
interaction horizon. These models arise naturally in applications such as crowds~\cite{CGL2012,ACG2015,BG2016,BHL2023,FGKP2022,AHV2023_1,AHV2024}, sedimentation~\cite{BBKT2011}, laser technology~\cite{CM2015}, granular media~\cite{AS2012} and conveyor belt dynamics~\cite{GHS+2014}.  Their well-posedness and quantitative stability results for entropy solutions with respect to initial data and parameters of the model like $f,\mu$ and radius of $\mu$, have been extensively studied in the last decade~\cite{CK24,FGR2021,BFK2022,AV2023,FCV2023,keimer2021discontinuous,CG2019,KLS2018,ANT2007,AS2012,BHL2023,ACT2015,AG2016,BG2016,FGKP2022,CGL2012,AHV2023_1,AHV2024,AHV2023,ACG2015,BBKT2011,GHS+2014,CG2023,CM2015,
colombo2012,
LecureuxMercier2011,
chiarello2019,
karlsen2000,
CR2018,
colombo2025general,
colombo2016biological,
colombo2009stability,
goatin2019well,
rossi2020well,
keimer2021discontinuous,
keimer2025optimal,
friedrich2023numerical,
pflug2023discontinuous
}. 
On the contrary, many real-world systems exhibit dynamics in which the present state depends on the past history, giving rise to \emph{memory effects}. Such hereditary behavior appears in applications ranging from viscoelasticity~\cite{Dafermos1970}, gas transport in porous media~\cite{Shi2003}, and subsurface transport processes~\cite{Haggerty1995}. A prototypical conservation law with memory reads
\begin{equation}\label{eq:memory_general}
\partial_t u
+ \partial_x \Big( f\big(u,\int_0^t u(\tau,x)\Gamma(t-\tau)\, d\tau \big) \Big)
=0,
\end{equation}
where $\Gamma$ is a temporal kernel encoding past influence. While several works~\cite{Dafermos1970, D1987, c2008, CC2007, DAF2012, CHR2007, N2023, DHSS2023, P2014} establish well-posedness under restrictive structural assumptions, a general $L^1$ stability theory for conservation laws with temporal nonlocality remains largely open.

These challenges motivated the authors in \cite{AV2026} to combine nonlocal-in-space and nonlocal-in-time dynamics and introduce an explicit memory-dependent formulation capturing the history dependence of the solution through space--time convolution kernels. The present work goes beyond this setting by incorporating {nonlocal space--time source terms}, leading to the following class of systems of nonlocal balance laws with memory:
\begin{align}
    \label{nlm}
  \partial_t U^{k} +\partial_x \Big(f^k(U^k) \nu^k((\boldsymbol{\Theta} \circledast  \boldsymbol{U})^k)\Big) &=R^k (\boldsymbol{U},(\boldsymbol{\Upsilon} \circledast {\boldsymbol{U}})^k), \quad (t,x) \in Q_T,\\
  \label{init}
  U^{k}(0,x)&=U_0^{k}(x), \quad x \in \R,
 \end{align} where $k\in \mathcal{N}:=\{1, \ldots, N\}$, $T$ is the final time and the unknown is $\boldsymbol{U}=(U^{k})_{k\in\mathcal{N}}:[0,\infty)\times\mathbb{R}\to\mathbb{R}^N.$ 
Further, for every $j,k\in \mathcal{N}$, ${U}^k_0\in (L^1 \cap \bv) (\R;[0,1]),\ \Theta^{j,k}(t,x):=\mu^{j,k}(x) \Gamma^{j,k}(t),\ \Upsilon^{j,k}(t,x):=\eta^{j,k}(x) \theta^{j,k}(t)$, with $\boldsymbol{\mu}:=(\mu^{j,k})_{j,k\in\mathcal{N}},\ \boldsymbol{\Gamma}:=(\Gamma^{j,k})_{j,k\in\mathcal{N}},$ $\ \boldsymbol{\eta}:=(\eta^{j,k})_{j,k\in\mathcal{N}}$ and $\boldsymbol{\theta}:=(\theta^{j,k})_{j,k\in\mathcal{N}}$ being smooth $N\times N$ matrices. Further, for $\boldsymbol{Z}=\boldsymbol{\Theta}$ and $\boldsymbol{\Upsilon}$, and for every $k\in \mathcal{N},$ $(\boldsymbol{Z}\circledast \boldsymbol{U})^k:= (Z^{j,k}*U^j)_{j\in\mathcal{N}}$,  where for every $(t,x)\in \overline{Q}_T$ 
\begin{align}\label{mc}
\begin{split}
   (\Theta^{j,k}*U^j)(t,x)&:=\displaystyle\int_0^t\int_{\R} U^j(\tau,\xi)\mu^{j,k}(x-\xi)\Gamma^{j,k}(t-\tau)\d \xi \d \tau ,\ j\in\mathcal{N},\\
    (\Upsilon^{j,k}*U^j)(t,x)&:=\displaystyle\int_0^t\int_{\R} U^j(\tau,\xi)\eta^{j,k}(x-\xi)\theta^{j,k}(t-\tau)\d \xi \d \tau ,\ j\in\mathcal{N}.
\end{split}
\end{align}
Additionally, we assume that
\begin{enumerate}[(\textbf{H\arabic*})]
\item \label{H1A}$f^k \in  \lip(\R)$  with $ f^k(0)=0=f^k(1)$.
 \item \label{H2A}$\nu^k \in (C^2 \cap \bv \cap \, W^{2,\infty}) (\R^N,\R)$.
 \item \label{H3A}
The space kernels ${\mu}^{j,k},{\eta}^{j,k}\in C^1(\R) \cap W^{1,\infty}(\R)$, and the time kernels $\Gamma^{j,k},$ $\theta^{j,k}\in L^{\infty}([0,\infty);\R^+).$ 
  \item \label{H4A} $R^k \in  \lip(\R^{2N})$  with $ R^k(\boldsymbol{0},\boldsymbol{0})=0$.
 \end{enumerate} 
From a modeling perspective, the convective term captures the propagation of quantities whose velocity depends on both the local density and its past states, while the source term represents production, dissipation, or exchange effects that are themselves nonlocal in space and time. This framework is relevant, for example, to transport processes with delayed responses, multi-species interactions, and systems exhibiting relaxation or hysteresis effects, and can be seen as nonlocal-space-time extension of \cite{AHV2024,BFK2022,CK24,HR2019}. The well-posedness of similar conservation laws has also recently been studied in \cite{K2026}, where both spatial and temporal nonlocalities are incorporated at the level of the flux. However, the resulting PDE is treated primarily as an operator equation within a fixed-point framework, where the memory effect enter implicitly through a time-dependent flux operator and is controlled via contraction arguments and entropy estimates, without isolating an explicit convolution structure in the evolution equation.

In the present work, we establish quantitative stability results for entropy solutions for \eqref{nlm}, showing continuous dependence on initial data as well as on the flux functions, velocity fields, nonlocal interaction kernels, and memory terms. In particular, we derive a Lipschitz-type $L^1$-estimate that explicitly captures the influence of the memory kernel and its interaction with spatial nonlocal effects. To the best of our knowledge, this is the first stability result for systems of nonlocal balance laws with space–time convolution structure of the type \eqref{nlm}. The present work serves as a companion to \cite{AV2026}, which established existence and uniqueness of entropy solutions for the source-free {\color{black}counterpart} of \eqref{nlm}. Together, these results yield the complete well-posedness theory for this class of systems. 

The remainder of the paper is organized as follows. In \S\ref{def}, we introduce the precise assumptions and notations. In \S\ref{uni}, we establish stability estimates with respect to initial data and nonlinear fluxes. In \S\ref{num}, we propose a first-order numerical scheme for the approximation of the initial value problem. Finally, in \S\ref{num1},  numerical experiments supporting the theory of the article are presented.

\section{Definitions and notation}\label{def}
In this section, we introduce the notations used in the sequel:
\begin{enumerate}
    \item For $\boldsymbol{Z}:=(Z^k)_{k\in\mathcal{N}}\in \R^N,$ let $\norma{\boldsymbol{Z}}:=\displaystyle\sum\limits_{k\in\mathcal{N}}\abs{Z^k}$ denote the usual $1$-norm.\\
    \item 
$
\norma{\boldsymbol{\Theta}}_{(L^{\infty}(\overline{Q}_T))^{N^2}}:= \max\limits_{j,k\in\mathcal{N}} \norma{\Theta^{j,k}}_{L^{\infty}(\overline{Q}_T)}.
$
\item If $\boldsymbol{\mu}\in C^1(\R;\R^{N^2})$, then $\boldsymbol{\mu}'=({\dot{\mu}}^{j,k})_{j,k\in\mathcal{N}}\in C(\R;\R^{N^2})$  denote the component-wise derivative.
   \item For $\boldsymbol{U}:\overline{Q}_T \rightarrow \R^N$ 
  and $\tau>0$, 
\begin{align*}
&|\boldsymbol{U}|_{(L^\infty_t\bv_x)^N}:=\max_{k\in\mathcal{N}}\sup_{t\in[0,T]} TV(U^k(t,\dott)),\\
&\quad 
\norma{\boldsymbol{U}}_{(L^{\infty}(\overline{Q}_T))^N}:=\max_{k\in\mathcal{N}} \norma{U^k}_{L^{\infty}(\overline{Q}_T)},\\ 
    &\quad 
\norma{\boldsymbol{U}}_{(L^1(\overline{Q}_T))^N}:=\sum\limits_{k\in\mathcal{N}}\norma{U^k}_{L^1(\overline{Q}_T)}.
\end{align*} 
\end{enumerate}
\vspace{-4mm}
Since $f^k$ is nonlinear, there can be multiple weak solutions of \eqref{nlm}-\eqref{init}, like in a local hyperbolic conservation law. Hence, an entropy condition is required to single out the unique solution.
\begin{definition}\label{def:entropy}
    A function $\textbf{U} \in (C([0,T];L^1(\R;[0,1]))\cap L^{\infty}([0,T];\bv(\R)))^N$  is an entropy solution of \eqref{nlm}-\eqref{init} with initial data $\textbf{U}_0$
if for each $(k,\alpha)\in \mathcal{N}\times\R$, and for all non-negative $\phi\in C_c^{\infty}([0,T)\times \R),$
\begin{multline} \label{kruz2}
\int_{Q_T}\left|U^k(t,x)- \alpha\right|\phi_t(t,x)  \d x \d t+\int_{\R} \left|U_0^k(x)- \alpha\right|\phi(0,x) \d x  \\ 
+ \int_{Q_T}\sgn ({U}^k(t,x)-\alpha) \nu^k((\boldsymbol{\Theta}\circledast \boldsymbol{U})^k(t,x))
(f^k({U}^k(t,x))-f^k(\alpha))\phi_x(t,x) \d{x} \d{t}\\ 
 -\int_{Q_T} f^k(\alpha) \sgn ({U}^k(t,x)-\alpha) \partial_x\nu^k((\boldsymbol{\Theta}\circledast \boldsymbol{U})^k(t,x))\phi(t,x)\d{x} \d{t}\\
 +\int_{Q_T} \sgn({U}^k(t,x)-\alpha)R^k (\boldsymbol{U},(\boldsymbol{\Upsilon} \circledast {\boldsymbol{U}})^k)(t,x)\phi(t,x)\d{x}\d{t} \geq 0. 
\end{multline}
\end{definition}
\section{Stability and Uniqueness}\label{uni} 
We now prove the stability of the  IVP \eqref{nlm}-\eqref{init} with respect to the flux, initial data and kernels of the convolution. More precisely, we have the following result:
\begin{theorem}[Stability Estimate]\label{theorem:stability}
Let $\boldsymbol{f}, \overline{\boldsymbol{f}} , \boldsymbol{\nu}, \overline{\boldsymbol{\nu}}, \boldsymbol{\mu}, \overline{\boldsymbol{\mu}},\boldsymbol{\Gamma}, \overline{\boldsymbol{\Gamma}},\boldsymbol{R}, \overline{\boldsymbol{R}}, \boldsymbol{\Upsilon}, $ and $\boldsymbol{\overline{\Upsilon}} $  satisfy \ref{H1A}--\ref{H4A}. Now fix initial data $\boldsymbol{U}_0, \boldsymbol{V}_0 \in (L^1(\R))^N$. Also let $\boldsymbol{U}$ be the entropy solution of the  IVP~\eqref{nlm}-\eqref{init} and $\boldsymbol{V}$ be the entropy solution of
\begin{align}
    \label{eq:coupled1}
  \partial_t V^{k} +\partial_x \Big(\overline{f}^k(V^k) \overline{\nu}^k((\overline{\boldsymbol{\Theta}} \circledast  \boldsymbol{V})^k)\Big) &=\overline{R}^k (\boldsymbol{V},(\overline{\boldsymbol{\Upsilon}} \circledast {\boldsymbol{V}})^k), \quad (t,x) \in Q_T,\\
  \label{eq:coupledic1}
  V^{k}(0,x)&=V_0^{k}(x), \quad x \in \R,
 \end{align}
with $k \in \mathcal{N}.$
Then, for any $t \in [0,T]$, the following estimate holds:

\begin{align}
\begin{split}
&\norma{\boldsymbol{U}(t,.)-\boldsymbol{V}(t,.)}_{(L^1(\mathbb{R}))^N}\le\Big(\norma{\boldsymbol{U}_0-\boldsymbol{V}_0}_{(L^1(\mathbb{R}))^N}+{\mathcal{C}_{1}}\abs{\overline{\boldsymbol f}-\boldsymbol{f}}_{(\lipR)^N}\\
    &+{\mathcal{C}_{2}}\norma{\overline{\boldsymbol\nu}-\boldsymbol{\nu}}_{(L^{\infty}(\R^N))^N}+{\mathcal{C}_{3}}\norma{\nabla \overline{\boldsymbol{\nu}}-\nabla \boldsymbol{\nu}}_{(L^{\infty}(\mathbb{R}^N))^{N^2}}+{\mathcal{C}_{4}}\norma{\overline{\boldsymbol{\Gamma}}-\boldsymbol{\Gamma}}_{(L^{\infty}(\R^+))^{N^2}}\\ \nonumber
&+{\mathcal{C}_{5}}\norma{\overline{\boldsymbol{\mu}}-\boldsymbol{\mu}}_{(W^{1,\infty}(\R))^{N^2}}+\mathcal{C}_{6}\norma{\overline{\boldsymbol{\eta}}-\boldsymbol{\eta}}_{(L^1(R))^{N^2}}\\ \nonumber
&\quad+\mathcal{C}_{7}\norma{\overline{\boldsymbol{\theta}}-\boldsymbol{\theta}}_{(L^1(\R^+))^{N^2}}+\mathcal{C}_{8}\abs{{\boldsymbol R}-\overline{\boldsymbol R}}_{\operatorname{Lip}({\R^{2N}})}\Big)\exp(\mathcal{C}_{9}t),
\end{split}
\end{align}
where ${\mathcal{C}_{1}}$ -- $\mathcal{C}_{9}$ are constants that depend on
$\boldsymbol{U}, \boldsymbol{V}, \boldsymbol{f},\overline{\boldsymbol{f}},
 \boldsymbol{\mu},\overline{\boldsymbol{\mu}},
 \boldsymbol{\Gamma},\boldsymbol{\overline{\Gamma}},
 \boldsymbol{\nu},
\overline{\boldsymbol{\nu}},{\boldsymbol R},$ and $\overline{\boldsymbol R}$.
\end{theorem}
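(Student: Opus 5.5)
The plan is Kružkov's doubling of variables, applied componentwise. The key observation is that, for fixed solutions, the nonlocal quantities $(\boldsymbol{\Theta}\circledast\boldsymbol{U})^k$ and $(\boldsymbol{\Upsilon}\circledast\boldsymbol{U})^k$ are given functions of $(t,x)$. Each $U^k$ is therefore the entropy solution of a \emph{local} scalar balance law with a space--time dependent velocity $a^k(t,x):=\nu^k((\boldsymbol{\Theta}\circledast\boldsymbol{U})^k(t,x))$ and a given source $r^k(t,x):=R^k(\boldsymbol{U},(\boldsymbol{\Upsilon}\circledast\boldsymbol{U})^k)(t,x)$. The same holds for $V^k$, with coefficients $\overline a^k$ and $\overline r^k$.

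\textbf{Step 1: Kuznetsov-type estimate for a single component.} For fixed $k$, take \eqref{kruz2} for $U^k$ in the variables $(t,x)$ with $\alpha=V^k(s,y)$, and the analogous inequality for $V^k$ in $(s,y)$ with $\alpha=U^k(t,x)$. Add them, using a test function $\phi(t,x)\,\delta_\varepsilon(x-y)\,\delta_{\varepsilon_0}(t-s)$, with $\phi$ approximating $\mathbb{1}_{[0,t]}$ in time and $\delta$ standard mollifiers. Letting $\varepsilon_0\to0$, and then $\varepsilon\to0$ after bounding the flux-mismatch terms, yields the following local-in-time inequality:
\begin{align*}
\norma{U^k(t)-V^k(t)}_{L^1}\le\norma{U^k_0-V^k_0}_{L^1}&+\int_0^t\!\!\int_{\R}\Big(|f^k-\overline f^k|_{\lip}|\partial_x U^k|+\lip(\overline f^k)|a^k-\overline a^k|\,|\partial_x U^k|\\
&+|\partial_x(a^k-\overline a^k)|\Big)\d x\d s+\int_0^t\norma{r^k-\overline r^k}_{L^1}\d s.
\end{align*}
Here $|\partial_xU^k|$ stands for the total variation measure. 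Because $f^k(0)=f^k(1)=0$ and $U^k\in[0,1]$, the terms containing $f^k(\alpha)\partial_x\nu^k$ are controlled by $\norma{f^k-\overline f^k}_{L^\infty}\le|f^k-\overline f^k|_{\lip}$ and by $\lip(\overline f^k)\,|U^k-V^k|\,|\partial_x \overline a^k|$. The source terms contribute $\int|r^k-\overline r^k|$, since $\sgn(U-V)(r-\overline r)\le|r-\overline r|$.

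\textbf{Step 2: Bound the nonlocal differences in terms of $\norma{\boldsymbol U-\boldsymbol V}$ and the parameter differences.} Write
\[
\Theta^{j,k}*U^j-\overline\Theta^{j,k}*V^j=\overline\Theta^{j,k}*(U^j-V^j)+(\Theta^{j,k}-\overline\Theta^{j,k})*U^j .
\]
The first piece is bounded in $L^\infty$ by $\norma{\overline\mu}_{L^\infty}\norma{\overline\Gamma}_{L^\infty}\int_0^s\norma{U^j-V^j}_{L^1}$. The second piece is bounded using
\[
\mu\Gamma-\overline\mu\overline\Gamma=(\mu-\overline\mu)\Gamma+\overline\mu(\Gamma-\overline\Gamma),
\]
together with $\norma{U^j(\tau)}_{L^1}$. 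This yields the $\mathcal C_4$ and $\mathcal C_5$ terms. Then $|a^k-\overline a^k|\le\norma{\nu^k-\overline\nu^k}_{L^\infty}+\lip(\overline\nu^k)\,\norma{(\boldsymbol\Theta\circledast\boldsymbol U)^k-(\overline{\boldsymbol\Theta}\circledast\boldsymbol V)^k}$. Multiplying by $|\partial_xU^k|$ and integrating in $x$ costs only the factor $|\boldsymbol U|_{(L^\infty_t\bv_x)^N}$.

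For $\partial_x(a^k-\overline a^k)$, use the chain rule $\nabla\nu^k\cdot\partial_x(\cdots)$ and split into $(\nabla\nu^k-\nabla\overline\nu^k)$, $(\nabla\overline\nu^k(A)-\nabla\overline\nu^k(B))$ (which uses the $W^{2,\infty}$ bound in \ref{H2A}), and $\partial_x$ of the convolution difference. The last carries $\mu'$ and $\overline\mu'$, which produces the $W^{1,\infty}$ norm.

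The $L^1_x$ integrability needed here is the main obstacle. Sup-norm bounds on a convolution are not integrable in $x$. I would therefore estimate in $L^1_x$ directly, using Young's inequality $\norma{\overline\mu'*(U-V)}_{L^1}\le\norma{\overline\mu'}_{L^\infty}$-type bounds, which need care. First I would try $\norma{\overline\mu'}_{L^1}\norma{U-V}_{L^1}$, and otherwise bound the cross terms through $\partial_x(\boldsymbol\Theta\circledast\boldsymbol U)=\boldsymbol\mu*\partial_x\boldsymbol U$ with the BV bound. The sources are handled similarly:
\[
\norma{r^k-\overline r^k}_{L^1}\le|R^k-\overline R^k|_{\lip}\big(\norma{\boldsymbol U}_{L^1}+\norma{\boldsymbol\Upsilon\circledast\boldsymbol U}_{L^1}\big)+\lip(\overline R^k)\big(\norma{\boldsymbol U-\boldsymbol V}_{L^1}+\norma{\boldsymbol\Upsilon\circledast\boldsymbol U-\overline{\boldsymbol\Upsilon}\circledast\boldsymbol V}_{L^1}\big).
\]
Applying Young's inequality in space and time to the last term gives the $L^1$ norms of $\eta-\overline\eta$ and $\theta-\overline\theta$, i.e.\ the $\mathcal C_6$ and $\mathcal C_7$ terms.

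\textbf{Step 3: Gronwall.} Summing over $k$ gives
\[
\norma{\boldsymbol U(t)-\boldsymbol V(t)}\le \mathcal A+C\int_0^t\Big(\norma{\boldsymbol U(s)-\boldsymbol V(s)}+\int_0^s\norma{\boldsymbol U(\tau)-\boldsymbol V(\tau)}\d\tau\Big)\d s,
\]
where $\mathcal A$ collects the initial and parameter differences multiplied by constants depending on $T$, the BV and $L^1$ bounds of $\boldsymbol U,\boldsymbol V$, and the kernel norms. Setting $E(t):=\sup_{[0,t]}\norma{\boldsymbol U-\boldsymbol V}$, the memory double integral is at most $T\int_0^tE$. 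Gronwall's inequality then gives the stated bound with $\mathcal C_9=C(1+T)$.
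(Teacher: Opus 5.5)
\textbf{There is a genuine gap.} Your overall route matches the paper's: doubling of variables for each component, sup-norm control of the nonlocal terms, then Gronwall. The problem is that Step 1 throws away the weight that makes the argument close, and the ``main obstacle'' you then face in Step 2 cannot be resolved under \ref{H3A}.

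\textbf{Where Step 1 goes wrong.} In your inequality the term $\int_{\R}|\partial_x(a^k-\overline a^k)|\,dx$ appears with no weight. You also bound the $f^k(\alpha)\partial_x\nu^k$ contributions by $\|f^k-\overline f^k\|_{L^\infty}$ times $|\partial_x\overline a^k|$. Under \ref{H3A} the kernels are only $C^1\cap W^{1,\infty}$. So $\partial_x a^k$ is bounded but in general not integrable in $x$. For example, $\mu=\sin$ is admissible, and then $\mu'*U^j(\tau)$ is generally a nonzero trigonometric function.

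\textbf{Why your proposed remedies fail.}
\begin{itemize}
\item Young's inequality with $\|\overline\mu'\|_{L^1}$ needs an assumption that is not in \ref{H3A}. It would also produce $\|\mu'-\overline\mu'\|_{L^1}$, not the $W^{1,\infty}$ norm in the statement.
\item Writing $\partial_x(\overline\Theta\circledast(U-V))$ as $\overline\mu*\partial_x(U-V)$ produces $\TV(U^j-V^j)$. Gronwall cannot absorb this in terms of $\|\boldsymbol U-\boldsymbol V\|_{L^1}$.
\end{itemize}

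\textbf{The missing observation.} Integrate the $\xi_\rho'(x-y)$ terms by parts in $x$ and combine them with the $f^k(\alpha)\partial_x\nu^k$ terms. All derivative-of-velocity terms then reduce, as $\rho\downarrow 0$, to $\sgn(U^k-V^k)\bigl(\overline f^k(U^k)\,\partial_x\overline a^k-f^k(U^k)\,\partial_x a^k\bigr)$. In other words, they are always multiplied by the flux evaluated at $U^k(t,x)$ itself.

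You already invoke $f^k(0)=0$, but only to pass to an $L^\infty$ bound. You need it pointwise:
\begin{itemize}
\item $|f^k(U^k)|\le|f^k|_{\lip}\,|U^k|$;
\item $|\overline f^k(U^k)-f^k(U^k)|\le|\overline f^k-f^k|_{\lip}\,|U^k|$.
\end{itemize}
The $L^1_x$-integrability then comes from $\|U^k(t,\cdot)\|_{L^1}$. Meanwhile $\partial_x\overline a^k$ and $\partial_x(\overline a^k-a^k)$ need only sup-norm bounds. These follow from $\|\boldsymbol\mu'\|_{L^\infty}$, $\|\boldsymbol\Gamma\|_{L^\infty}$, $\operatorname{Hess}\overline{\boldsymbol\nu}$ and $\int_0^t\|\boldsymbol U-\boldsymbol V\|_{L^1}\,d\tau$. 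This is exactly what produces the $W^{1,\infty}$ dependence on $\overline{\boldsymbol\mu}-\boldsymbol\mu$ and the $\mathcal C_1$, $\mathcal C_3$, $\mathcal C_4$ and $\mathcal C_5$ terms in the paper.

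With this fix, the rest of your plan goes through along the lines of the paper. That covers the $\TV$ factor on the $|a^k-\overline a^k|\,|\partial_x U^k|$ term, Young's inequality for the source, and Gronwall with the memory integral.
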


\begin{proof}

  Let $(t,x) \in Q_T$ and $k\in\mathcal{N}.$
For $\phi = \phi(t,x,s,y) \in C_c^{\infty}(Q_T^2)$ and for a.e.~$(s,y) \in Q_T$, the entropy condition \eqref{kruz2} for ${U}^k(t,x)$ with $\alpha = {V}^k(s,y)$ can be rewritten as: \vspace{-2.5mm}
\begin{align}
\begin{split} \label{511}
0\leq&\int_{Q_T}\left|U^k(t,x)- {V}^k(s,y)\right|\phi_t  \d x \d t  \\ 
&+ \int_{Q_T}\sgn \Big({U}^k(t,x)-{V}^k(s,y)\Big) \nu^k((\boldsymbol{\Theta} \circledast  \boldsymbol{U})^k(t,x))
\Big(f^k({U}^k(t,x))\\
&\quad\quad\quad\quad\quad\quad\quad\quad\quad\quad\quad\quad\quad\quad\quad\quad\quad\quad\quad\quad\quad\quad-f^k({V}^k(s,y))\Big)\phi_x \d{x} \d{t}\\ 
 &-\int_{Q_T}\sgn \Big({U}^k(t,x)-{V}^k(s,y)\Big) f^k({V}^k(s,y))  \partial_x({\nu}^k((\boldsymbol{\Theta} \circledast  \boldsymbol{U})^k(t,x)))\phi\d{x} \d{t}\\
 &+\displaystyle\int_{Q_T} \sgn\Big({U}^k(t,x)-{V}^k(s,y)\Big)R^k (\boldsymbol{U},(\boldsymbol{\Upsilon} \circledast {\boldsymbol{U}})^k)(t,x)\phi\d{x}\d{t}. 
 \end{split}
\end{align}

Using the relation,
{\allowdisplaybreaks\begin{align*}
&\Big(f^k({U}^k(t,x))-f^k({V}^k(s,y))\Big)\nu^k((\boldsymbol{\Theta} \circledast  \boldsymbol{U})^k(t,x))\phi_x\\
&\quad\quad\quad\quad\quad\quad\quad-f^k({V}^k(s,y)) \partial_x({\nu}^k((\boldsymbol{\Theta} \circledast  \boldsymbol{U})^k(t,x)))\phi\\
&\quad=-\Big(\Big(f^k({V}^k(s,y))\nu^k((\boldsymbol{\Theta} \circledast  \boldsymbol{U})^k(t,x))-\overline{f}^k({V}^k(s,y))\overline{\nu}^k((\overline{\boldsymbol{\Theta}} \circledast  \boldsymbol{V})^k(s,y))\Big)\phi\Big)_x \\
&\quad\quad+\Big(f^k({U}^k(t,x)){\nu}^k((\boldsymbol{\Theta} \circledast  \boldsymbol{U})^k(t,x))-\overline{f}^k({V}^k(s,y))\overline{\nu}^k((\overline{\boldsymbol{\Theta}} \circledast  \boldsymbol{V})^k(s,y))\Big)\phi_x,
\end{align*}}
\eqref{511} can now be rewritten as:
{\allowdisplaybreaks
\begin{align}\label{611}
& -\displaystyle \int_{{\color{black}Q_T}} \abs{{U}^k(t,x)-{V}^k(s,y)}\phi_t \d{x} \d{t}\\\nonumber
&-\displaystyle \int_{{\color{black}Q_T}}\sgn \Big({U}^k(t,x)-{V}^k(s,y)\Big)\Big(f^k({U}^k(t,x)){\nu}^k((\boldsymbol{\Theta} \circledast  \boldsymbol{U})^k(t,x))\\ \nonumber
&\quad\quad\quad\quad\quad\quad\quad\quad\quad\quad\quad\quad\quad\quad\quad\quad-\overline{f}^k({V}^k(s,y))\overline{\nu}^k((\overline{\boldsymbol{\Theta}} \circledast  \boldsymbol{V})^k(s,y))\Big)\phi_x \d{x} \d{t} \\ \nonumber
&-\displaystyle \int_{{\color{black}Q_T}}\sgn\Big( {U}^k(t,x)-{V}^k(s,y)\Big)\Big(\left(\overline{f}^k({V}^k(s,y))\overline{\nu}^k((\overline{\boldsymbol{\Theta}} \circledast  \boldsymbol{V})^k(s,y))\right.\\ \nonumber
&\quad\quad\quad\quad\quad\quad\quad\quad\quad\quad\quad\quad\quad\quad\quad\quad\left.-f^k({V}^k(s,y)){\nu}^k((\boldsymbol{\Theta} \circledast  \boldsymbol{U})^k(t,x))\right)\phi\Big)_x\d{x} \d{t}\\ \nonumber
&-\displaystyle\int_{Q_T} \sgn\Big({U}^k(t,x)-{V}^k(s,y)\Big)R^k (\boldsymbol{U},(\boldsymbol{\Upsilon} \circledast {\boldsymbol{U}})^k)(t,x)\phi\d{x}\d{t}\leq 0.
\end{align}}
Now, repeating the same as above for the entropy condition \eqref{kruz2} for ${V}^k(s,y)$ with $\alpha={U}^k(t,x),$ we get, 
{\allowdisplaybreaks\begin{align}\label{6111}
&-\displaystyle \int_{{\color{black}Q_T}} \abs{{U}^k(t,x)-{V}^k(s,y)}\phi_s \d{y} \d{s}\\\nonumber
&-\displaystyle \int_{{\color{black}Q_T}}\sgn \Big({V}^k(s,y)-{U}^k(t,x)\Big)\Big(\overline{f}^k({V}^k(s,y))\overline{\nu}^k((\overline{\boldsymbol{\Theta}} \circledast  \boldsymbol{V})^k(s,y))\\ \nonumber
&\quad\quad\quad\quad\quad\quad\quad\quad\quad\quad\quad\quad\quad\quad\quad\quad-f^k({U}^k(t,x)){\nu}^k((\boldsymbol{\Theta} \circledast  \boldsymbol{U})^k(t,x))\Big)\phi_y \d{y} \d{s} \\\nonumber
&-\displaystyle \int_{{\color{black}Q_T}}\sgn\Big( {V}^k(s,y)-{U}^k(t,x)\Big)\Big(\Big(f^k({U}^k(t,x)){\nu}^k((\boldsymbol{\Theta} \circledast  \boldsymbol{U})^k(t,x))-\\ \nonumber
&\quad\quad\quad\quad\quad\quad\quad\quad\quad\quad\quad\quad\quad\quad\quad\quad\overline{f}^k({U}^k(t,x))\overline{\nu}^k((\overline{\boldsymbol{\Theta}} \circledast  \boldsymbol{V})^k(s,y))\Big)\phi\Big)_y\d{y} \d{s}\\ \nonumber
&-\displaystyle\int_{Q_T} \sgn\Big({V}^k(s,y)-{U}^k(t,x)\Big)\overline{R}^k (\boldsymbol{V},(\overline{\boldsymbol{\Upsilon}} \circledast {\boldsymbol{V}})^k)(s,y)\phi\d{y}\d{s}\leq 0.
\end{align}}
Integrating \eqref{611} and \eqref{6111} with respect to $y,s$ and $x,t$, respectively, and adding, we get:
{\allowdisplaybreaks\begin{align*}\label{31}
&-\displaystyle \int_{{\color{black}{\color{black}Q^2_T}}} \abs{{U}^k(t,x)-{V}^k(s,y)}(\phi_t+\phi_s)  \d{x} \d{t} \d{y} \d{s}\\
&-\displaystyle \int_{{\color{black}{\color{black}Q^2_T}}}\sgn \Big({U}^k(t,x)-{V}^k(s,y)\Big)\Big(f^k({U}^k(t,x)){\nu}^k((\boldsymbol{\Theta} \circledast  \boldsymbol{U})^k(t,x))\\
&\quad\quad\quad\quad\quad\quad\quad\quad\quad\quad\quad\quad-\overline{f}^k({V}^k(s,y))\overline{\nu}^k((\overline{\boldsymbol{\Theta}} \circledast  \boldsymbol{V})^k(s,y))\Big)(\phi_x+\phi_y) \d{x} \d{t} \d{y} \d{s}\\&
-\displaystyle \int_{{\color{black}{\color{black}Q^2_T}}} \sgn \Big({U}^k(t,x)-{V}^k(s,y)\Big) \Big(\Big(\left(\overline{f}^k({V}^k(s,y))\overline{\nu}^k((\overline{\boldsymbol{\Theta}} \circledast  \boldsymbol{V})^k(s,y))\right.\\
&\quad \quad \quad  \quad \quad\quad \quad \quad  \quad \quad\quad \quad \quad  \quad \quad \quad \quad\left.-f^k({V}^k(s,y)){\nu}^k((\boldsymbol{\Theta} \circledast  \boldsymbol{U})^k(t,x))\right)\phi\Big)_x\\&\quad \quad \quad \quad \quad \quad  \quad \quad  -\Big(\Big(f^k({U}^k(t,x)){\nu}^k((\boldsymbol{\Theta} \circledast  \boldsymbol{U})^k(t,x))\\&\quad \quad \quad \quad \quad \quad  \quad \quad \quad \quad \quad \quad\quad \quad-\overline{f}^k({U}^k(t,x))\overline{\nu}^k((\overline{\boldsymbol{\Theta}} \circledast  \boldsymbol{V})^k(s,y))\Big)\phi\Big)_y  \Big)\d{x} \d{t} \d{y} \d{s}\\
&-\displaystyle\int_{Q_T} \sgn\Big({U}^k(t,x)-{V}^k(s,y)\Big)\Big(R^k (\boldsymbol{U},(\boldsymbol{\Upsilon} \circledast {\boldsymbol{U}})^k)(t,x)\\
&\quad \quad \quad \quad \quad \quad\quad \quad \quad \quad \quad \quad\quad \quad \quad \quad -\overline{R}^k (\boldsymbol{V},(\overline{\boldsymbol{\Upsilon}} \circledast {\boldsymbol{V}})^k)(s,y)\Big)\phi \d{x} \d{t} \d{y} \d{s}
\leq 0.
\end{align*}}
Simplifying the above equations, we get:
{\allowdisplaybreaks\begin{align}
\label{eq:I}
- \displaystyle \int_{{\color{black}{\color{black}Q^2_T}}}(I_0+I_1+I_2+I_S)\, \d{x} \d{t} \d{y} \d{s} \leq  0,
\end{align}}
where
{\allowdisplaybreaks
\begin{align*}
I_0&=\abs{{U}^k(t,x)-{V}^k(s,y)}(\phi_t+\phi_s),\\
I_1&=\sgn \Big({U}^k(t,x)-{V}^k(s,y)\Big)\Big(f^k({U}^k(t,x)){\nu}^k((\boldsymbol{\Theta} \circledast  \boldsymbol{U})^k(t,x))\\
&\quad\quad\quad\quad\quad\quad\quad\quad\quad\quad\quad\quad\quad\quad-\overline{f}^k({V}^k(s,y))\overline{\nu}^k((\overline{\boldsymbol{\Theta}} \circledast  \boldsymbol{V})^k(s,y))\Big)(\phi_x+\phi_y),\\
I_2&= \sgn \Big({U}^k(t,x)-{V}^k(s,y)\Big) \Big(\Big(\left(\overline{f}^k({V}^k(s,y))\overline{\nu}^k((\overline{\boldsymbol{\Theta}} \circledast  \boldsymbol{V})^k(s,y))\right.\\
&\quad \quad \quad \quad \quad \quad \quad\quad \quad \quad \quad \quad \quad \quad\quad \quad \quad \quad \left.-f^k({V}^k(s,y)){\nu}^k((\boldsymbol{\Theta} \circledast  \boldsymbol{U})^k(t,x))\right)\phi\Big)_x\\
& \quad-\Big(\Big(f^k({U}^k(t,x)){\nu}^k((\boldsymbol{\Theta} \circledast  \boldsymbol{U})^k(t,x))-\overline{f}^k({U}^k(t,x))\overline{\nu}^k((\overline{\boldsymbol{\Theta}} \circledast  \boldsymbol{V})^k(s,y))\Big)\phi\Big)_y  \Big),\\
I_S&=\sgn\Big({U}^k(t,x)-{V}^k(s,y)\Big)\Big(R^k (\boldsymbol{U},(\boldsymbol{\Upsilon} \circledast {\boldsymbol{U}})^k)(t,x)-\overline{R}^k (\boldsymbol{V},(\overline{\boldsymbol{\Upsilon}} \circledast {\boldsymbol{V}})^k)(s,y)\Big)\phi.
\end{align*}}
Next, we introduce a non-negative function $\delta\in {{C_c^{\infty}}}(\R)$ such that
\[ \displaystyle \int_{\R}\delta({\omega})\d{{\omega}}=1,\ \delta({\omega})=\delta(-{\omega}),\ \delta({\omega})=0,\ \text{ for }|{\omega}|\ge 1,\] and set
    \[\xi_{\rho}(s):=\frac{1}{\rho}\delta\left(\frac{s}{\rho}\right),\quad \rho\in \R^+,\ s\in \R.\]
   We then choose $\phi:=\Phi=\Phi(t,x,s,y)\in {C_{{\color{black}c}}^{\infty}}({\color{black}Q^2_T}),$ by  \begin{align*}\Phi(t,x,s,y)=\psi(t,x)\xi_{\rho}(t-s)\xi_{\rho}(x-y),
\end{align*} where  $\psi(t,x)\in {C_c^{\infty}}({\color{black}Q_T})$ is a non-negative test function.
It is then straightforward to see that
\begin{align*}
&\left(\overline{f}^k({V}^k(s,y))\overline{\nu}^k((\overline{\boldsymbol{\Theta}} \circledast  \boldsymbol{V})^k(s,y))-f^k({V}^k(s,y)){\nu}^k((\boldsymbol{\Theta} \circledast  \boldsymbol{U})^k(t,x))\right)\Phi_x\\
&\quad= \left(\overline{f}^k({V}^k(s,y))\overline{\nu}^k((\overline{\boldsymbol{\Theta}} \circledast  \boldsymbol{V})^k(s,y))\right.\\
&\quad\quad\quad\quad\left.-f^k({V}^k(s,y)){\nu}^k((\boldsymbol{\Theta} \circledast  \boldsymbol{U})^k(t,x))\right)\psi(t,x)\xi_{\rho}(t-s)\xi^{'}_{\rho}(x-y)\\
&\qquad+\left(\overline{f}^k({V}^k(s,y))\overline{\nu}^k((\overline{\boldsymbol{\Theta}} \circledast  \boldsymbol{V})^k(s,y))\right.\\
&\quad\quad\quad\quad\left.-f^k({V}^k(s,y)){\nu}^k((\boldsymbol{\Theta} \circledast  \boldsymbol{U})^k(t,x))\right)\psi_x(t,x)\xi_{\rho}(t-s)\xi_{\rho}(x-y)
\end{align*}
and
\begin{align*}
&\Big(f^k({U}^k(t,x)){\nu}^k((\boldsymbol{\Theta} \circledast  \boldsymbol{U})^k(t,x))-\overline{f}^k({U}^k(t,x))\overline{\nu}^k((\overline{\boldsymbol{\Theta}} \circledast  \boldsymbol{V})^k(s,y))\Big)\Phi_y\\&\quad= \Big(-f^k({U}^k(t,x)){\nu}^k((\boldsymbol{\Theta} \circledast  \boldsymbol{U})^k(t,x))\\
&\quad\quad\quad\quad\quad\quad+\overline{f}^k({U}^k(t,x))\overline{\nu}^k((\overline{\boldsymbol{\Theta}} \circledast  \boldsymbol{V})^k(s,y))\Big)\psi(t,x)\xi_{\rho}(t-s)\xi^{'}_{\rho}(x-y).
\end{align*}
We write $I_2=I_{2,1}+I_{2,2}$, where $I_{2,1}$ has terms with derivatives of $\Phi$ and $I_{2,2}$ has terms without the derivatives of $\Phi$, i.e.,
\begin{align*}
I_2=&{\color{black}I_{2,1}}+{\color{black}I_{2,2}}\\
=&\sgn \Big({U}^k(t,x)-{V}^k(s,y)\Big) \Big(\Big(\overline{f}^k({V}^k(s,y))\overline{\nu}^k((\overline{\boldsymbol{\Theta}} \circledast  \boldsymbol{V})^k(s,y))\\
&\quad\quad\quad\quad\quad\quad\quad\quad\quad\quad\quad\quad\quad-f^k({V}^k(s,y)){\nu}^k((\boldsymbol{\Theta} \circledast  \boldsymbol{U})^k(t,x))\Big)\Phi_x\\ &{\quad -\Big(f^k({U}^k(t,x)){\nu}^k((\boldsymbol{\Theta} \circledast  \boldsymbol{U})^k(t,x))-\overline{f}^k({U}^k(t,x))\overline{\nu}^k((\overline{\boldsymbol{\Theta}} \circledast  \boldsymbol{V})^k(s,y))\Big)\Phi_y\Big)}\\
&\quad+{\color{black}\sgn \Big({U}^k(t,x)-{V}^k(s,y)\Big)\Phi \Big(-\partial_x({\nu}^k((\boldsymbol{\Theta} \circledast  \boldsymbol{U})^k(t,x)))f^k({V}^k(s,y))} \\
&\quad\quad\quad\quad\quad\quad\quad\quad \quad\quad\quad\quad\quad\quad\quad+\partial_y({\overline{\nu}}^k((\overline{\boldsymbol{\Theta}} \circledast  \boldsymbol{V})^k(s,y)))\overline{f}^k({U}^k(t,x))\Big).
\end{align*}
 We then write $I_{2,1}=I_{2,1,1}+I_{2,1,2}$, where $I_{2,1,1}$ has terms of only $\xi_{\rho}$ and $I_{2,1,2}$ has terms of $\xi'_{\rho}$
i.e.,
{\allowdisplaybreaks\begin{align*}
I_{2,1,1}&=\sgn \Big({U}^k(t,x)-{V}^k(s,y)\Big)\Big(\overline{f}^k({V}^k(s,y))\overline{\nu}^k((\overline{\boldsymbol{\Theta}} \circledast  \boldsymbol{V})^k(s,y))\\
&\quad\quad\quad\quad\quad\quad\quad\quad\quad\quad- f^k({V}^k(s,y)){\nu}^k((\boldsymbol{\Theta} \circledast  \boldsymbol{U})^k(t,x))\Big)\psi_x\xi_{\rho}(t-s)\xi_{\rho}(x-y),\\
I_{2,1,2}&=\sgn \Big({U}^k(t,x)-{V}^k(s,y)\Big)\left(\overline{f}^k({V}^k(s,y))\overline{\nu}^k((\overline{\boldsymbol{\Theta}} \circledast  \boldsymbol{V})^k(s,y))\right.\\
&\quad\quad\left.-f^k({V}^k(s,y)){\nu}^k((\boldsymbol{\Theta} \circledast  \boldsymbol{U})^k(t,x))+f^k({U}^k(t,x)){\nu}^k((\boldsymbol{\Theta} \circledast  \boldsymbol{U})^k(t,x))\right.\\&\left.\quad\quad-\overline{f}^k({U}^k(t,x))\overline{\nu}^k((\overline{\boldsymbol{\Theta}} \circledast  \boldsymbol{V})^k(s,y))\right) \psi(t,x)\xi_{\rho}(t-s) \xi^{'}_{\rho}(x-y)
\\
&=\Big(F^k({U}^k(t,x),{V}^k(s,y)){\nu}^k((\boldsymbol{\Theta} \circledast  \boldsymbol{U})^k(t,x))\\&\quad\quad-\overline{F}^k({U}^k(t,x),{V}^k(s,y))\overline{\nu}^k((\overline{\boldsymbol{\Theta}} \circledast  \boldsymbol{V})^k(s,y))\Big) \times \psi(t,x)\xi_{\rho}(t-s) \xi^{'}_{\rho}(x-y),
\end{align*}}
where $F^k(a,b)=\sgn (a-b)(f^k(a)-f^k(b)) \ \text{and}\ \overline{F}^k(a,b)=\sgn (a-b)(\overline{f}^k(a)-\overline{f}^k(b)).$ 
Applying integration by parts on $I_{2,1,2},$  we get rid of the derivatives of the term $\xi_{\rho}$
as below: \begin{align*}&\displaystyle\int_{{\color{black}Q^2_T}}I_{2,1,2}\d{x} \d{t} \d{y} \d{s}\\ 
&=\displaystyle \int_{{\color{black}Q^2_T}}\Big(\overline{F}^k_x({U}^k,{V}^k)\overline{\nu}^k((\overline{\boldsymbol{\Theta}} \circledast  \boldsymbol{V})^k(s,y))-F^k_x({U}^k,{V}^k){\nu}^k((\boldsymbol{\Theta} \circledast  \boldsymbol{U})^k(t,x))\\
&\quad\quad\quad-F^k({U}^k,{V}^k)\partial_x({\nu}^k((\boldsymbol{\Theta} \circledast  \boldsymbol{U})^k(t,x)))\Big)\psi(t,x)\xi_{\rho}(t-s)\xi_{\rho}(x-y)\d{x} \d{t} \d{y} \d{s}\\
&\quad\quad+ \displaystyle \int_{{\color{black}Q^2_T}}\Big(\overline{F}^k({U}^k,{V}^k)\overline{\nu}^k((\overline{\boldsymbol{\Theta}} \circledast  \boldsymbol{V})^k(s,y))\\ \nonumber
&\quad\quad\quad\quad\quad\quad\quad-F^k({U}^k,{V}^k){\nu}^k((\boldsymbol{\Theta} \circledast  \boldsymbol{U})^k(t,x))\Big)\psi_x\xi_{\rho}(t-s)\xi_{\rho}(x-y)\d{x} \d{t} \d{y} \d{s}.\\
\end{align*}
Now $I_{2,1,2}+I_{2,2}$ 
\begin{align*}
&=\Big(\overline{F}^k_x({U}^k,{V}^k)\overline{\nu}^k((\overline{\boldsymbol{\Theta}} \circledast  \boldsymbol{V})^k(s,y))-F^k_x({U}^k,{V}^k){\nu}^k((\boldsymbol{\Theta} \circledast  \boldsymbol{U})^k(t,x))\Big)\Phi\\
&\quad+\Big(\overline{F}^k({U}^k,{V}^k)\overline{\nu}^k((\overline{\boldsymbol{\Theta}} \circledast  \boldsymbol{V})^k(s,y))\\
&\quad\quad\quad\quad\quad\quad\quad\quad-F^k({U}^k,{V}^k){\nu}^k((\boldsymbol{\Theta} \circledast  \boldsymbol{U})^k(t,x))\Big)\psi_x\xi_{\rho}(t-s)\xi_{\rho}(x-y)\\
&\quad+{\color{black}\sgn \Big({U}^k(t,x)-{V}^k(s,y)\Big) \Big(\overline{f}^k({U}^k(t,x))\partial_y({\overline{\nu}}^k((\overline{\boldsymbol{\Theta}} \circledast  \boldsymbol{V})^k(s,y)))}\\ &\quad\quad\quad\quad\quad\quad\quad\quad\quad\quad\quad\quad\quad\quad\quad\quad-f^k({U}^k(t,x))\partial_x({\nu}^k((\boldsymbol{\Theta} \circledast  \boldsymbol{U})^k(t,x)))\Big)\Phi.
\end{align*}

Since ${U}^k(t,\cdot) \in \bv(\R),$ we have (see \cite[Lemma 4.1]{KR2001} for details): \begin{align*}
    \abs{F^k_x({U}^k,{V}^k)}\le {\color{black}\abs{f^k}_{\lipR}}\abs{\partial_x {U}^k},\,\,\text{and}\,\,\abs{\overline{F}^k_x({U}^k,{V}^k)}\le{\color{black}\abs{\overline{f}^k}_{\lipR}}\abs{\partial_x {U}^k}.
\end{align*} Also, note that 
\begin{align*}\allowdisplaybreaks
&\abs{\overline{F}^k_x({U}^k,{V}^k)\overline{\nu}^k((\overline{\boldsymbol{\Theta}} \circledast  \boldsymbol{V})^k(s,y))-F^k_x({U}^k,{V}^k){\nu}^k((\boldsymbol{\Theta} \circledast  \boldsymbol{U})^k(t,x))}\\
&\le\abs{\overline{F}^k_x({U}^k,{V}^k)-F^k_x({U}^k,{V}^k)}\abs{\overline{\nu}^k((\overline{\boldsymbol{\Theta}} \circledast  \boldsymbol{V})^k(s,y))}\\ \nonumber
&\quad\quad+\abs{F^k_x({U}^k,{V}^k)}\abs{\overline{\nu}^k((\overline{\boldsymbol{\Theta}} \circledast  \boldsymbol{V})^k(s,y))-{\nu}^k((\boldsymbol{\Theta} \circledast  \boldsymbol{U})^k(t,x))}\\
    &\le {\abs{\overline{f}^k-f^k}_{\lipR}}\abs{\partial_x {U}^k}\abs{\overline{\nu}^k((\overline{\boldsymbol{\Theta}} \circledast  \boldsymbol{V})^k(s,y))}\\ 
&\quad\quad+\abs{f^k}_{\lipR}\abs{\partial_x {U}^k}\abs{\overline{\nu}^k((\overline{\boldsymbol{\Theta}} \circledast  \boldsymbol{V})^k(s,y))-{\nu}^k((\boldsymbol{\Theta} \circledast  \boldsymbol{U})^k(t,x))}\end{align*}
    and
    \begin{align*}\allowdisplaybreaks
&\abs{\overline{f}^k({U}^k(t,x))\partial_y({\overline{\nu}}^k((\overline{\boldsymbol{\Theta}} \circledast  \boldsymbol{V})^k(s,y)))-f^k({U}^k(t,x))\partial_x({\nu}^k((\boldsymbol{\Theta} \circledast  \boldsymbol{U})^k(t,x)))}\\ 
&\le\abs{\overline{f}^k({U}^k(t,x))-f^k({U}^k(t,x))}\abs{\partial_y({\overline{\nu}}^k((\overline{\boldsymbol{\Theta}} \circledast  \boldsymbol{V})^k(s,y)))}\\
&\quad+\abs{f^k({U}^k(t,x))} \abs{\partial_y({\overline{\nu}}^k((\overline{\boldsymbol{\Theta}} \circledast  \boldsymbol{V})^k(s,y)))-\partial_x({\nu}^k((\boldsymbol{\Theta} \circledast  \boldsymbol{U})^k(t,x)))}\\
&\le \abs{\overline{f}^k-f^k}_{\lipR}\abs{\partial_y({\overline{\nu}}^k((\overline{\boldsymbol{\Theta}} \circledast  \boldsymbol{V})^k(s,y)))}\abs{{U}^k(t,x)}\\
&\quad+\abs{f^k}_{\lipR}\abs{{U}^k(t,x)}\abs{\partial_y({\overline{\nu}}^k((\overline{\boldsymbol{\Theta}} \circledast  \boldsymbol{V})^k(s,y)))-\partial_x({\nu}^k((\boldsymbol{\Theta} \circledast  \boldsymbol{U})^k(t,x)))}.
\end{align*}
Hence, we have, $I_{2,1,2}+I_{2,2}$
\begin{align*}
&\le \Big({\abs{\overline{f}^k-f^k}_{\lipR}}\abs{\partial_x {U}^k}\abs{\overline{\nu}^k((\overline{\boldsymbol{\Theta}} \circledast  \boldsymbol{V})^k(s,y))}\\ \nonumber
&\quad+\abs{f^k}_{\lipR}\abs{\partial_x {U}^k}\abs{\overline{\nu}^k((\overline{\boldsymbol{\Theta}} \circledast  \boldsymbol{V})^k(s,y))-{\nu}^k((\boldsymbol{\Theta} \circledast  \boldsymbol{U})^k(t,x))}\Big)\abs{\Phi}\\
&\quad+\Big|\overline{F}^k({U}^k,{V}^k)\overline{\nu}^k((\overline{\boldsymbol{\Theta}} \circledast  \boldsymbol{V})^k(s,y))\\
&\quad\quad\quad\quad\quad\quad\quad\quad-F^k({U}^k,{V}^k){\nu}^k((\boldsymbol{\Theta} \circledast  \boldsymbol{U})^k(t,x))\Big||\psi_x|\xi_{\rho}(t-s)\xi_{\rho}(x-y)\\
&\quad+\Big(\abs{\overline{f}^k-f^k}_{\lipR}\abs{\partial_y({\overline{\nu}}^k((\overline{\boldsymbol{\Theta}} \circledast  \boldsymbol{V})^k(s,y)))}\abs{{U}^k(t,x)}\\
&\quad\quad\quad+\abs{f^k}_{\lipR}\abs{{U}^k(t,x)}\abs{\partial_y({\overline{\nu}}^k((\overline{\boldsymbol{\Theta}} \circledast  \boldsymbol{V})^k(s,y)))-\partial_x({\nu}^k((\boldsymbol{\Theta} \circledast  \boldsymbol{U})^k(t,x)))}\Big)\abs{\Phi}.
\end{align*}
Now fix $0<t_1<t<t_2<T$ and choose
\begin{align*} \psi(t,x)&=\psi_{r,\Xi}(t,x)=\Psi^1_{r}(x)\Psi^2_{{\Xi}}(t),  \quad  \quad \quad  \, \,\, \quad \quad  r>1, {\Xi}>0,\\ 
\Psi^1_{r}(x)&=\int_{\R}\delta(|x-y|)\chi_{|y|<r}\d{y},   \quad \, \, \,  \quad \quad \quad  \quad \quad x\in \R, \\
\Psi^2_{{\Xi}}(t)&=\int_{-\infty}^t
\Big(\xi_{{\Xi}}(\tau(t,x)-t_1)-\xi_{{\Xi}}(\tau(t,x)-t_2)\Big)d\tau,    \quad \quad 0<t_1<t<t_2<T,
\end{align*}
so that the terms containing $\psi_x$ will go to zero when 
$r\uparrow \infty$. Hence, taking the limits $\rho \downarrow 0$ and $r\uparrow \infty$ in (\ref{eq:I}) effectively implies that,

\begin{align}\label{star}
&- \lim_{\rho \downarrow 0, r\uparrow \infty}\displaystyle \int_{{\color{black}Q^2_T} }(I_0+I_1+I_{2,1,1}+I_{2,1,2}+I_{2,2})\, \d{x} \d{t} \d{y} \d{s} \leq  0.
\end{align} Essentially, \allowdisplaybreaks we get,
\begin{align*}
&\lim_{\rho \downarrow 0, r\uparrow \infty}\int_{{\color{black}Q^2_T}}I_0 \d{x} \d{t} \d{y} \d{s}=\int_{{\color{black}Q_T}}|{U}^k(t,x)-{V}^k(t,x)|\Psi^{2'}_{{\Xi}}(t) \d{x} \d{t},\\
&\lim_{\rho \downarrow 0, r\uparrow \infty}\int_{{\color{black}Q^2_T}}I_1 \d{x} \d{t} \d{y} \d{s}=0,
\\
&\lim_{\rho \downarrow 0, r\uparrow \infty}\int_{{\color{black}Q^2_T}}I_{S} \d{x} \d{t} \d{y} \d{s}=\int_{{\color{black}Q_T}}\sgn\Big({U}^k(t,x)-{V}^k(t,x)\Big)\Big(R^k (\boldsymbol{U},(\boldsymbol{\Upsilon} \circledast {\boldsymbol{U}})^k)(t,x)\\
&\quad\quad\quad\quad\quad\quad\quad\quad\quad\quad\quad\quad\quad\quad\quad\quad\quad\quad\quad\quad-\overline{R}^k (\boldsymbol{V},(\overline{\boldsymbol{\Upsilon}} \circledast {\boldsymbol{V}})^k)(t,x)\Big)\Psi^2_{{\Xi}}(t)\d{x}\d{t},
\\
&\lim_{\rho \downarrow 0, r\uparrow \infty}\int_{{\color{black}Q^2_T}}I_{2,1,1} \d{x} \d{t} \d{y} \d{s}=0,
\\
&\lim_{\rho \downarrow 0, r\uparrow \infty}\int_{{\color{black}Q^2_T}}(I_{2,1,2}+I_{2,2}) \d{x} \d{t} \d{y} \d{s}\\
& \le \int_{{\color{black}Q_T}}\Big(\abs{\overline{f}^k-f^k}_{\lipR}\abs{\partial_x {U}^k}\abs{\overline{\nu}^k((\overline{\boldsymbol{\Theta}} \circledast  \boldsymbol{V})^k(t,x))}\\
&\quad\quad\quad\quad+\abs{f^k}_{\lipR}\abs{\partial_x{U}^k}\abs{\overline{\nu}^k((\overline{\boldsymbol{\Theta}} \circledast  \boldsymbol{V})^k(t,x))-{\nu}^k((\boldsymbol{\Theta} \circledast  \boldsymbol{U})^k(t,x))}\Big)\Psi^2_{{\Xi}}(t) \d{x} \d{t}\\
&\quad+\int_{{\color{black}Q_T}}\Big( \abs{\overline{f}^k-f^k}_{\lipR}\abs{\partial_x({\overline{\nu}}^k((\overline{\boldsymbol{\Theta}} \circledast  \boldsymbol{V})^k(t,x)))}\abs{{U}^k(t,x)}\\
&\quad\quad\quad\quad\quad\quad+\abs{f^k}_{\lipR}\abs{{U}^k(t,x)}\abs{\partial_x({\overline{\nu}}^k((\overline{\boldsymbol{\Theta}} \circledast  \boldsymbol{V})^k(t,x)))\\
&\quad\quad\quad\quad\quad\quad\quad\quad\quad\quad\quad\quad-\partial_x({\nu}^k((\boldsymbol{\Theta} \circledast  \boldsymbol{U})^k(t,x)))}\Big)\Psi^2_{{\Xi}}(t) \d{x} \d{t}.
\end{align*} 
Further as $\Xi\downarrow 0,$ \eqref{star} implies 
{\allowdisplaybreaks
\begin{align}\label{eq:ut2vt2ut1vt1}
   & \displaystyle \int_{\R}{\color{black}\abs{{{U}^k}(t_2,x)-{{V}^k}(t_2,x)}}\d{x}-\displaystyle \int_{\R}{\color{black}\abs{{{U}^k}(t_1,x)-{{V}^k}(t_1,x)}}\d{x}\\ \nonumber
 &\leq  \abs{\overline{f}^k-f^k}_{\lipR}\displaystyle \int_{t_1}^{t_2}\displaystyle \int_{\R} \Big(\abs{\partial_x {U}^k}\abs{\overline{\nu}^k((\overline{\boldsymbol{\Theta}}\circledast \boldsymbol{V})^k(t,x))}\\
&\quad\quad\quad\quad\quad\quad\quad\quad\quad+\abs{\partial_x(\overline{\nu}^k((\overline{\boldsymbol{\Theta}}\circledast \boldsymbol{V})^k(t,x)))}\abs{{U}^k(t,x)}\Big)\d{x}\d{t}\\ \nonumber
&\quad+\abs{f^k}_{\lipR}\int_{t_1}^{t_2}\displaystyle \int_{\R}\Big(\abs{\partial_x {U}^k}\abs{\overline{\nu}^k((\overline{\boldsymbol{\Theta}}\circledast \boldsymbol{V})^k(t,x))-\nu^k((\boldsymbol{\Theta}\circledast \boldsymbol{U})^k(t,x))}\\ \nonumber
&\quad+\abs{{U}^k(t,x)}\abs{\partial_x(\overline{\nu}^k((\overline{\boldsymbol{\Theta}}\circledast \boldsymbol{V})^k(t,x)))-\partial_x(\nu^k((\boldsymbol{\Theta}\circledast \boldsymbol{U})^k(t,x)))}\Big)\d{x}\d{t}\\ \nonumber
&\quad+\int_{t_1}^{t_2}\displaystyle \int_{\R}\abs{R^k (\boldsymbol{U},(\boldsymbol{\Upsilon} \circledast {\boldsymbol{U}})^k)(t,x)-\overline{R}^k (\boldsymbol{V},(\overline{\boldsymbol{\Upsilon}} \circledast {\boldsymbol{V}})^k)(t,x)}\d{x}\d{t}\\ \nonumber
&:=J_1+J_2+J_3. \nonumber
\end{align}}
To prove the theorem, we  estimate the right hand side of (\ref{eq:ut2vt2ut1vt1}). To this end, we first observe that:
{\allowdisplaybreaks
\begin{align*}
\left|\overline{\nu}^k((\overline{\boldsymbol{\Theta}}\circledast \boldsymbol{V})^k(t,x))\right|&\le \norma{{\overline{\nu}^k}}_{L^{\infty}(\mathbb{R}^N)}, \\
\norma{\partial_x((\overline{\boldsymbol{\Theta}}\circledast \boldsymbol{V})^k(t,x))}&=\sum_{j\in\mathcal{N}}\left|\partial_x( (\overline{\Theta}^{j,k}*V^j)(t,x))\right|\\
&=\sum_{j\in\mathcal{N}}\left| \displaystyle\int_0^t\int_{\R} V^j(\tau,\xi)\dot{\overline{\mu}}^{j,k}(x-\xi)\overline{\Gamma}^{j,k}(t-\tau) \d \xi \d \tau\right|\\
&\le\sum_{j\in\mathcal{N}}\norma{\dot{\overline{\mu}}^{j,k}}_{L^\infty(\R)}\norma{\overline{\Gamma}^{j,k}}_{L^\infty(\R^{+})}\norma{ V^j}_{L^1(Q_T)}\\
&\le\norma{\overline{\boldsymbol\mu}'}_{(L^\infty(\R))^{N^2}}\norma{\overline{\boldsymbol\Gamma}}_{(L^\infty(\R^{+}))^{N^2}}\norma{ \boldsymbol{V}}_{(L^1(Q_T))^N}\\
&:=\mathcal{C}_{11},
\end{align*}}
\vspace{-5mm}
\begin{align*}
&\abs{\partial_x(\overline{\nu}^k((\overline{\boldsymbol{\Theta}}\circledast \boldsymbol{V})^k(t,x)))}\\
&\le\sum\limits_{j\in\mathcal{N}}\abs{\partial_{j}\nu^{k}\!\left((\overline{\boldsymbol{\Theta}}\circledast \boldsymbol{V})^k(t,x)\right)}\,
\int_{0}^{t}\!\int_{\mathbb{R}}
\abs{V^{j}(\tau,\xi)}\,\abs{\dot{\overline{\mu}}^{j,k}(x-\xi)}\abs{\overline{\Gamma}^{j,k}(t-\tau)}  \d \xi \d \tau\\
&\le\norma{{\color{black}\nabla }\overline{\boldsymbol{\nu}}}_{(L^{\infty}(\R^N))^{N^2}}\norma{\overline{\boldsymbol{\Gamma}}}_{(L^{\infty}(\R^{+}))^{N^2}}\norma{\overline{\boldsymbol{\mu}}'}_{(L^{\infty}(\R))^{N^2}}\norma{\boldsymbol{V}}_{(L^1(Q_T))^N}\\
&=:{\mathcal{C}_{12}},
\end{align*}
which implies
{\begin{align}
J_1 
&\le\abs{\overline{f}^k-f^k}_{\lipR}\displaystyle \int_{t_1}^{t_2}\displaystyle \int_{\R}\Big(\abs{\partial_x {U}^k} \norma{{\overline{\nu}^k}}_{L^{\infty}(\mathbb{R}^N)}+{\mathcal{C}_{12}}\abs{{U}^k(t,x)}\Big) \d{x}\d{t}
\\
    & \le \abs{\overline{f}^k-f^k}_{\lipR}\Big(T \norma{{\overline{\nu}^k}}_{L^{\infty}(\mathbb{R}^N)}|{U}^k|_{L^\infty_t\bv_x}+{\mathcal{C}_{12}}\norma{U^k}_{L^{1}(Q_T)}\Big)\nonumber\\
         & \le{{\mathcal{C}_{21}}\abs{\overline{f}^k-f^k}_{\lipR} },\nonumber  
     \end{align}}
        where ${\mathcal{C}_{21}}:=T \norma{{\overline{\boldsymbol{\nu}}}}_{(L^{\infty}(\mathbb{R}^N))^N}|\boldsymbol{U}|_{(L^\infty_t\bv_x)^N}+{\mathcal{C}_{12}}\norma{\boldsymbol{U}}_{(L^{1}(Q_T))^N}.$\\
        
Furthermore, to estimate $J_2$, note that 
\begin{align} \nonumber
&\norma{(\overline{\boldsymbol{\Theta}}\circledast \boldsymbol{V})^k(t,x)-(\boldsymbol{\Theta}\circledast \boldsymbol{U})^k(t,x)}
\\ \nonumber &=\sum\limits_{j\in\mathcal{N}}\left| (\overline{\Theta}^{j,k}*V^j)(t,x) - (\Theta^{j,k}*U^j)(t,x)\right|\\ \nonumber
    &\le \sum\limits_{j\in\mathcal{N}}\left| (\overline{\Theta}^{j,k}*(U^j-V^j))(t,x)\right|+\left|((\overline{\Theta}^{j,k}-\Theta^{j,k})*U^j)(t,x)\right|\\ \nonumber
&\le\sum\limits_{j\in\mathcal{N}}\int_0^t\int_{\R}
\abs{U^j(\tau,\xi)-V^j(\tau,\xi)}\abs{\overline{\Theta}^{j,k}(t-\tau,x-\xi)} \d \xi \d \tau\\ \nonumber
&\quad+\sum\limits_{j\in\mathcal{N}}\int_0^t\int_{\R}
\abs{U^j(\tau,\xi)}\abs{(\overline{\Theta}^{j,k}-\Theta^{j,k})(t-\tau,x-\xi)} \d \xi \d \tau\\
\label{eq:Diffconv}
&\le\norma{\overline{\boldsymbol{\Theta}}}_{(L^{\infty}(\overline{Q}_T))^{N^2}}\int_0^t\norma{\boldsymbol{U}(\tau,\cdot)-\boldsymbol{V}(\tau,\cdot)}_{(L^1(\R))^N} \d \tau\\ \nonumber
&\quad+\norma{\overline{\boldsymbol{\Gamma}}}_{{(L^{\infty}(\R^{+}))}^{N^2}}\norma{\overline{\boldsymbol{\mu}}-\boldsymbol{\mu}}_{{(L^{\infty}(\R))}^{N^2}}\norma{\boldsymbol{U}}_{(L^1({Q}_T))^N}\\ \nonumber
&\quad+\norma{\boldsymbol{\mu}}_{{(L^{\infty}(\R))}^{N^2}}\norma{\overline{\boldsymbol{\Gamma}}-\boldsymbol{\Gamma}}_{{(L^{\infty}(\R^{+}))}^{N^2}}\norma{\boldsymbol{U}}_{(L^1({Q}_T))^N}, \nonumber
\end{align}
and 
\begin{align*}
&\norma{\partial_x((\overline{\boldsymbol{\Theta}}\circledast \boldsymbol{V})^k(t,x))-\partial_x((\boldsymbol{\Theta}\circledast \boldsymbol{U})^k(t,x))}\\
    &\quad \le\norma{ (\partial_x\overline{\boldsymbol{\Theta}}\circledast(\boldsymbol{V}- \boldsymbol{U}))^k(t,x)}+\norma{(\partial_x(\overline{\boldsymbol{\Theta}}-\boldsymbol{\Theta})\circledast \boldsymbol{U})^k(t,x)}\\
&\quad \le\sum\limits_{j\in\mathcal{N}}
\int_{0}^{t}\!\int_{\mathbb{R}}
\abs{(V^{j}-U^{j})(\tau,\xi)\,\dot{\overline{\mu}}^{j,k}(x-\xi)\overline{\Gamma}^{j,k}(t-\tau)}  \d \xi \d \tau\\
&\quad +\int_{0}^{t}\!\int_{\mathbb{R}}
\ \abs{U^{j}(\tau,\xi)\,\Big(\dot{\overline{\mu}}^{j,k}(x-\xi)\overline{\Gamma}^{j,k}(t-\tau)-\dot{\mu}^{j,k}(x-\xi)\Gamma^{j,k}(t-\tau)\Big)} \d \xi \d \tau\\
&\quad \le \norma{\boldsymbol{\mu}'}_{(L^{\infty}(\R))^{N^2}}\norma{\boldsymbol{\Gamma}}_{(L^{\infty}(\R^+))^{N^2}}\int_0^t\norma{\boldsymbol{U}(\tau,\cdot)-\boldsymbol{V}(\tau,\cdot)}_{(L^1(\R))^N} \d \tau\\
&\quad + \Big(\norma{\boldsymbol{\mu}'}_{(L^{\infty}(\R))^{N^2}}\norma{\overline{\boldsymbol{\Gamma}}-\boldsymbol{\Gamma}}_{(L^{\infty}(\R^+))^{N^2}}\\
&\quad\quad\quad\quad\quad\quad\quad\quad\quad+\norma{\overline{\boldsymbol{\mu}}'-\boldsymbol{\mu}'}_{(L^{\infty}(\R))^{N^2}}\norma{\boldsymbol{\Gamma}}_{(L^{\infty}(\R^+))^{N^2}}\Big) \norma{\boldsymbol{U}}_{(L^1(Q_T))^N}.
\end{align*}
Consequently,
 \begin{align*}
&\abs{\overline{\nu}^k((\overline{\boldsymbol{\Theta}}\circledast \boldsymbol{V})^k(t,x))-\nu^k((\boldsymbol{\Theta}\circledast \boldsymbol{U})^k(t,x))}\\
&\quad\le\abs{\overline{\nu}^k((\overline{\boldsymbol{\Theta}}\circledast \boldsymbol{V})^k(t,x))-\nu^k((\overline{\boldsymbol{\Theta}}\circledast\boldsymbol{V})^k(t,x))}\\
&\quad\quad+\abs{\nu^k((\overline{\boldsymbol{\Theta}}\circledast \boldsymbol{V})^k(t,x))-\nu^k((\boldsymbol{\Theta}\circledast \boldsymbol{U})^k(t,x))}\\
&\quad\le\norma{\overline{\nu}^k-\nu^k}_{L^{\infty}(\R^N)}+\abs{\nu^k}_{\lip(\R^N)}\norma{(\overline{\boldsymbol{\Theta}}\circledast \boldsymbol{V})^k(t,x)-(\boldsymbol{\Theta}\circledast \boldsymbol{U})^k(t,x)}\\
&\quad\le\norma{\overline{\nu}^k-\nu^k}_{L^{\infty}(\R^N)}+\mathcal{C}_{13}\int_0^t\norma{\boldsymbol{U}(\tau,\cdot)-\boldsymbol{V}(\tau,\cdot)}_{(L^1(\R))^N} \d \tau\\
&\quad\quad\quad+\mathcal{C}_{14}\norma{\overline{\boldsymbol{\mu}}-\boldsymbol{\mu}}_{{(L^{\infty}(\R))}^{N^2}}+\mathcal{C}_{15}\norma{\overline{\boldsymbol{\Gamma}}-\boldsymbol{\Gamma}}_{{(L^{\infty}(\R^{+}))}^{N^2}},
\end{align*}
 \vspace{-5mm}
\begin{align*}
\text{where\,}\quad \mathcal{C}_{13}&:=\abs{\boldsymbol{\nu}}_{(\lip(\R^N))^N}\norma{\overline{\boldsymbol{\Theta}}}_{{(L^{\infty}(\overline{Q}_T))}^{N^2}},\\
\mathcal{C}_{14}&:=\abs{\boldsymbol{\nu}}_{(\lip(\R^N))^N}\norma{\overline{\boldsymbol{\Gamma}}}_{{(L^{\infty}(\R^{+}))}^{N^2}}\norma{\boldsymbol{U}}_{(L^1(Q_T))^N},\\
\mathcal{C}_{15}&:=\abs{\boldsymbol{\nu}}_{(\lip(\R^N))^N}\norma{\boldsymbol{\mu}}_{{(L^{\infty}(\R))}^{N^2}}\norma{\boldsymbol{U}}_{(L^1(Q_T))^N},
\end{align*}
and
\begin{align*}
&\abs{\partial_x(\overline{\nu}^k((\overline{\boldsymbol{\Theta}}\circledast \boldsymbol{V})^k(t,x)))-\partial_x(\nu^k((\boldsymbol{\Theta}\circledast \boldsymbol{U})^k(t,x)))}\\
&\le{\mathcal{C}_{11}}\norma{\nabla\overline{\nu}^k\left((\overline{\boldsymbol{\Theta}}\circledast \boldsymbol{V})^k(t,x)\right)-\nabla\overline{\nu}^k\left((\boldsymbol{\Theta}\circledast \boldsymbol{U})^k(t,x)\right)}\\
&\quad+{\mathcal{C}_{11}}\norma{\nabla\overline{\nu}^k\left((\boldsymbol{\Theta}\circledast \boldsymbol{U})^k(t,x)\right)-\nabla{\nu}^k\left((\boldsymbol{\Theta}\circledast \boldsymbol{U})^k(t,x)\right)}\\
&\quad+\norma{\nabla{\nu}^k\left((\boldsymbol{\Theta}\circledast \boldsymbol{U})^k(t,x)\right)}\norma{\partial_x((\overline{\boldsymbol{\Theta}}\circledast \boldsymbol{V})^k(t,x))-\partial_x((\boldsymbol{\Theta}\circledast \boldsymbol{U})^k(t,x))}\\
&\le \mathcal{C}_{16}\int_0^t\norma{\boldsymbol{U}(\tau,\cdot)-\boldsymbol{V}(\tau,\cdot)}_{(L^1(\R))^N} \d \tau+{\mathcal{C}_{11}}\norma{\nabla \overline{\nu}^k-\nabla {\nu}^k}_{(L^{\infty}(\mathbb{R}^N))^N}\\ \nonumber
&\quad+\mathcal{C}_{17}\norma{\overline{\boldsymbol{\Gamma}}-\boldsymbol{\Gamma}}_{{(L^{\infty}(\R^{+}))}^{N^2}}+\mathcal{C}_{18}\norma{\overline{\boldsymbol{\mu}}-\boldsymbol{\mu}}_{{(W^{1,\infty}(\R))}^{N^2}},
 \end{align*}
\begin{align*}
 \text{where} \quad \mathcal{C}_{16}&:={\mathcal{C}_{11}}\norma{\operatorname{Hess} \overline{\boldsymbol{\nu}}}_{(L^{\infty}({\R}^N))^{N^3}}\norma{\overline{\boldsymbol{\Theta}}}_{{(L^{\infty}(\overline{Q}_T))}^{N^2}}\\
 &\quad\quad+\norma{\nabla {\boldsymbol{\nu}}}_{(L^{\infty}(\mathbb{R}^N))^{N^2}} \norma{\boldsymbol{\mu}'}_{(L^{\infty}(\R))^{N^2}}\norma{\boldsymbol{\Gamma}}_{(L^{\infty}(\R^+))^{N^2}},\\ 
\mathcal{C}_{17}&:={\mathcal{C}_{11}}\norma{\operatorname{Hess} \overline{\boldsymbol{\nu}}}_{(L^{\infty}({\R}^N))^{N^3}}\norma{\boldsymbol{\mu}}_{{(L^{\infty}(\R))}^{N^2}}\norma{\boldsymbol{U}}_{(L^1({Q}_T))^N}\\
&\quad\quad+\norma{\nabla {\nu}^k}_{(L^{\infty}(\mathbb{R}^N))^N}\norma{\boldsymbol{U}}_{(L^1(Q_T))^N}\norma{\boldsymbol{\mu}'}_{(L^{\infty}(\R))^{N^2}},\\
\mathcal{C}_{18}&:={\mathcal{C}_{11}}\norma{\operatorname{Hess} \overline{\boldsymbol{\nu}}}_{(L^{\infty}({\R}^N))^{N^3}}\norma{\overline{\boldsymbol{\Gamma}}}_{{(L^{\infty}(\R^{+}))}^{N^2}}\norma{\boldsymbol{U}}_{(L^1({Q}_T))^N}\\
&\quad\quad+\norma{\nabla {\nu}^k}_{(L^{\infty}(\mathbb{R}^N))^N}\norma{\boldsymbol{U}}_{(L^1(Q_T))^N}\norma{\boldsymbol{\Gamma}}_{(L^{\infty}(\R^+))^{N^2}},
\end{align*}
 which implies 
\begin{align}\label{J2}
J_2&\le\abs{f^k}_{\lipR}\int_{t_1}^{t_2}\displaystyle \int_{\R}\Big(\abs{\partial_x {U}^k}\abs{\overline{\nu}^k((\overline{\boldsymbol{\Theta}}\circledast \boldsymbol{V})^k(t,x))-\nu^k((\boldsymbol{\Theta}\circledast \boldsymbol{U})^k(t,x))}\\ \nonumber
&\quad+\abs{{U}^k(t,x)}\norma{\partial_x(\overline{\nu}^k((\overline{\boldsymbol{\Theta}}\circledast \boldsymbol{V})^k(t,x)))-\partial_x(\nu^k((\boldsymbol{\Theta}\circledast \boldsymbol{U})^k(t,x)))}\Big)\d{x}\d{t}\\ \nonumber
&\le {\mathcal{C}_{22}}\norma{\overline{\nu}^k-\nu^k}_{L^{\infty}(\R^N)}+{\mathcal{C}_{23}}\norma{\nabla \overline{\nu}^k-\nabla {\nu}^k}_{(L^{\infty}(\mathbb{R}^N))^N}+{\mathcal{C}_{24}}\norma{\overline{\boldsymbol{\Gamma}}-\boldsymbol{\Gamma}}_{(L^{\infty}(\R^+))^{N^2}}\\ \nonumber
&\quad+{\mathcal{C}_{25}}\norma{\overline{\boldsymbol{\mu}}-\boldsymbol{\mu}}_{(W^{1,\infty}(\R))^{N^2}}+{\mathcal{C}_{26}}\int_0^T\norma{\boldsymbol{U}(\tau,\cdot)-\boldsymbol{V}(\tau,\cdot)}_{(L^1(\R))^N} \d \tau, \nonumber
\end{align}
 where
\begin{align*}
 &{\mathcal{C}_{22}}:=T\abs{\boldsymbol{f}}_{(\lipR)^N}\norma{\boldsymbol{U}}_{(L_t^{\infty}\bv_x)^N},\\
 &{\mathcal{C}_{23}}:={\mathcal{C}_{11}}\abs{\boldsymbol{f}}_{(\lipR)^N}\norma{\boldsymbol{U}}_{(L^{1}(Q_T))^N},\\
 &{\mathcal{C}_{24}}:=T\mathcal{C}_{15}\abs{\boldsymbol{f}}_{(\lipR)^N}\norma{\boldsymbol{U}}_{(L_t^{\infty}\bv_x)^N}+\mathcal{C}_{17}\abs{\boldsymbol{f}}_{(\lipR)^N}\norma{\boldsymbol{U}}_{(L^{1}(Q_T))^N},\\
 &{\mathcal{C}_{25}}:=T\mathcal{C}_{14}\abs{\boldsymbol{f}}_{(\lipR)^N}\norma{\boldsymbol{U}}_{(L_t^{\infty}\bv_x)^N}+\mathcal{C}_{18}\abs{\boldsymbol{f}}_{(\lipR)^N}\norma{\boldsymbol{U}}_{(L^{1}(Q_T))^N},\\
  &{\mathcal{C}_{26}}:=T\mathcal{C}_{13}\abs{\boldsymbol{f}}_{(\lipR)^N}\norma{\boldsymbol{U}}_{(L_t^{\infty}\bv_x)^N}+\mathcal{C}_{16}\abs{\boldsymbol{f}}_{(\lipR)^N}\norma{\boldsymbol{U}}_{(L^{1}(Q_T))^N}.
 \end{align*}
Finally, using the Young's convolution inequality, we get:
{\allowdisplaybreaks\begin{align}\label{J3}
J_3&=\int_{t_1}^{t_2}\displaystyle \int_{\R}\abs{R^k (\boldsymbol{U},(\boldsymbol{\Upsilon} \circledast {\boldsymbol{U}})^k)(t,x)-\overline{R}^k (\boldsymbol{V},(\overline{\boldsymbol{\Upsilon}} \circledast {\boldsymbol{V}})^k)(t,x)}\d{x}\d{t}\\  \nonumber
&\le\int_{t_1}^{t_2}\displaystyle \int_{\R}\Big(\abs{R^k (\boldsymbol{U},(\boldsymbol{\Upsilon} \circledast {\boldsymbol{U}})^k)(t,x)-{R}^k (\boldsymbol{V},(\overline{\boldsymbol{\Upsilon}} \circledast {\boldsymbol{V}})^k)(t,x)}\\  \nonumber
&\quad\quad+\abs{{R}^k (\boldsymbol{V},(\overline{\boldsymbol{\Upsilon}} \circledast {\boldsymbol{V}})^k)(t,x)-\overline{R}^k (\boldsymbol{V},(\overline{\boldsymbol{\Upsilon}} \circledast {\boldsymbol{V}})^k)(t,x)}\Big)\d{x}\d{t}\\ \nonumber
&\le\int_{t_1}^{t_2}\displaystyle \int_{\R}\Big[\abs{R^k}_{\operatorname{Lip}({\R^{2N}})}\Big(\norma{ \boldsymbol{U}(t,x)-\boldsymbol{V}(t,x)}\\
&\quad\quad\quad\quad+\norma{(\boldsymbol{\Upsilon} \circledast {\boldsymbol{U}})^k(t,x)-(\overline{\boldsymbol{\Upsilon}} \circledast {\boldsymbol{V}})^k(t,x)}\Big)\\  \nonumber
&\quad\quad+\abs{{R}^k-\overline{R}^k}_{\operatorname{Lip}({\R^{2N}})} \Big(\norma{\boldsymbol{V}(t,x)}+\norma{(\overline{\boldsymbol{\Upsilon}} \circledast {\boldsymbol{V}})^k(t,x)}\Big)\Big]\d{x}\d{t}\\ \nonumber
&\le {\mathcal{C}_{31}}\int_{t_1}^{t_2}\norma{ \boldsymbol{U}(t,\cdot)-\boldsymbol{V}(t,\cdot)}_{(L^1(\R))^N}\d{t}+{\mathcal{C}_{32}}\norma{\overline{\boldsymbol{\eta}}-\boldsymbol{\eta}}_{(L^1(\R))^{N^2}}\\  \nonumber
&\quad\quad+{\mathcal{C}_{33}}\norma{\overline{\boldsymbol{\theta}}-\boldsymbol{\theta}}_{(L^1(\R^+))^{N^2}}+{\mathcal{C}_{34}}\abs{\boldsymbol{R}-\overline{\boldsymbol{R}}}_{(\operatorname{Lip}({\R^{2N}}))^N}, \nonumber
\end{align}}
where
{\allowdisplaybreaks\begin{align*}
{\mathcal{C}_{31}}&:=\abs{\boldsymbol{R}}_{(\operatorname{Lip}({\R^{2N}}))^N}\Big(1+\norma{\boldsymbol{\Upsilon}}_{(L^1(Q_T))^{N^2}}\Big), \\
    {\mathcal{C}_{32}}&:=\abs{\boldsymbol{R}}_{(\operatorname{Lip}({\R^{2N}}))^N}\norma{\boldsymbol{\theta}}_{(L^\infty(\R^+))^{N^2}}\norma{{\boldsymbol{V}}}_{(L^1(Q_T))^{N}},  \\
    {\mathcal{C}_{33}}&:=\abs{\boldsymbol{R}}_{(\operatorname{Lip}({\R^{2N}}))^N}\norma{\overline{\boldsymbol{\eta}}}_{(L^\infty(\R))^{N^2}}\norma{{\boldsymbol{V}}}_{(L^1(Q_T))^{N}},  \\
    {\mathcal{C}_{34}}&:= \Big(1+\norma{\overline{\boldsymbol{\Upsilon}}}_{(L^1(Q_T))^{N^2}}\Big)\norma{ {\boldsymbol{V}}}_{(L^1(Q_T))^N}.
\end{align*}}
Using \eqref{eq:ut2vt2ut1vt1}-\eqref{J3}, invoking the  time continuity of the entropy solution $U^k$ and $V^k$  
and summing over $k\in\mathcal{N},$ we have
the result with
$\mathcal{C}_{1}=N\mathcal{C}_{21},\ \mathcal{C}_{2}:=N\mathcal{C}_{22},\ \mathcal{C}_{3}:=N\mathcal{C}_{23}, \mathcal{C}_{3}:=N\mathcal{C}_{23},\ \mathcal{C}_{4}:=N\mathcal{C}_{24},\ \mathcal{C}_{5}:=N\mathcal{C}_{25},\ \mathcal{C}_{6}:=N {\mathcal{C}_{32}},\ 
\mathcal{C}_{7}:=N {\mathcal{C}_{33}},\ \mathcal{C}_{8}:=N{\mathcal{C}_{34}},\ 
\mathcal{C}_{9}:=N({\mathcal{C}_{26}}+{\mathcal{C}_{31}}).$
Applying Gronwall's inequality yields the result.
\end{proof}

\begin{remark}
    Theorem \ref{theorem:stability} can be extended to less regular spatial kernels such as one-sided kernels, ${\mu}^{j,k}\in (C^1 \cap W^{1,\infty})([0,\zeta_\mu])$ and $\ {\eta}^{j,k}\in (C^1 \cap W^{1,\infty})([0,\zeta_\eta])$, by essentially modifying the above estimates in the spatial convolution terms. We refer the readers to the nonlinearity paper \cite{AHV2024} for the necessary estimates on one sided kernels.  
\end{remark}

\begin{remark}
    The above theorem implies that the entropy solutions to a given IVP are unique. 
\end{remark}
\section{Numerical approximations}\label{num}
For $\Delta x, \Delta t>0,$ and $\lambda:=\Delta t/\Delta x,$ consider equidistant spatial grid points $x_i:=i\Delta x$ for $i\in \Z,$ and let $\chi_i(x)$ denote the indicator function of $C_i:=[x_{i-1/2}, x_{i+1/2})$, where $x_{i+1/2}=\frac{1}{2}(x_i+x_{i+1})$. Further, let $t^n:=n\Delta t$ 
for integers in $\mathcal{N}_T:=\{0, \ldots, N_T\}$, such that $T=N_T \D t$ denote the temporal grid points, and let
$\chi^n(t)$ denote the indicator function of $C^{n}:=[t^n,t^{n+1})$. For every $k\in\mathcal{N}$, we approximate the initial data \eqref{init}, according to:
\begin{equation*}
U^{k,\Delta}_0(x):=\sum\limits_{i\in\Z}\chi_i(x)U^{k,0}_i\quad \mbox{where }U^{k,0}_i=\int_{C_i}U_{0}^k(x)\d x, \quad i\in \Z,
\end{equation*}
and define a piecewise constant approximate solution $U^{k,\Delta}$ to~\eqref{nlm}-\eqref{init}
by:
$$
  U^{k,\Delta} (t,x) =  U^{k,n}_{i}
  \mbox{ for } 
(t,x)\in {C}^{n}\times C_i, (i,n)\in \Z\times\mathcal{N}_T. 
$$
For every $(i,k,n)\in \Z\times\mathcal{N}\times\mathcal{N}_T$, $U^{k,n}_{i}$ is defined via the following marching formula:
\begin{align}
\begin{split}U^{k,n}_i&=H^k(\nu^k(\boldsymbol{c}^{k,n-1}_{i-1/2}),\nu^k(\boldsymbol{c}^{k,n-1}_{i+1/2}),U_{i-1}^{k,n},U_i^{k,n-1},U_{i+1}^{k,n-1}) \\&\quad + \D t R^k(U_1^{k,n-1},,\ldots,U_N^{k,n-1},\boldsymbol{d}_i^{k,n-1})
    \\  
    & := 
   U^{k,n-1}_i- \lambda \big[
    \mathcal{F}^k(\nu^k(\boldsymbol{c}^{k,n-1}_{i+1/2}),U_i^{k,n-1},U_{i+1}^{k,n})
    - 
    \mathcal{F}^k(\nu^k(\boldsymbol{c}^{k,n-1}_{i-1/2}),U_{i-1}^{k,n-1},U_{i}^{k,n})
     \big]\\&\quad + \D t R^k(U_1^{k,n-1},,\ldots,U_N^{k,n-1},\boldsymbol{d}_i^{k,n-1})\\ 
     \label{scheme2}
     &:=U^{k,n-1}_i- \lambda \bigl[
    \mathcal{F}^{k,n}_{i+1/2} (U_i^{k,n-1},U_{i+1}^{k,n-1})
    - 
\mathcal{F}^{k,n-1}_{i-1/2} (U_{i-1}^{k,n-1},U_{i}^{k,n})
     \bigr]
     \\&\quad + \D t R^k(U_1^{k,n-1},,\ldots,U_N^{k,n-1},\boldsymbol{d}_i^{k,n-1}).
     \end{split}\end{align}
     Here,  $\boldsymbol{c}_{i+1/2}^{k,n}:= \left(c_{i+1/2}^{s,k,n}\right)_{s\in \mathcal{N}},\ \boldsymbol{d}_i^{k,n}=\left(d_{i+1/2}^{s,k,n}\right)_{s\in \mathcal{N}}$ and 
$\mathcal{F}^k(\nu^k(\boldsymbol{c}^{k,n}_{i+1/2}),U_i^{k,n},U_{i+1}^{k,n})
    $ denotes the numerical approximation of the flux $f^k(U^k)\nu^k((\boldsymbol{\Theta}\circledast  \boldsymbol{U})^k)$ at $(t^n,x_{i+1/2})$ for $(k,i,n)\in \mathcal{N}\times\Z\times\mathcal{N}_T$, where for every $s,k\in \mathcal{N}$, 
\begin{align*}c_{i+1/2}^{s,k,n}:=\Delta x \D t \sum\limits_{m=0}^n\sum\limits_{p\in\Z} \Theta^{s,k,n-m}_{i+1/2-p} U^{s,m}_{p},\ 
d_{i+1/2}^{s,k,n}:=\Delta x \D t \sum\limits_{m=0}^n\sum\limits_{p\in\Z} \Upsilon^{s,k,n-m}_{i+1/2-p} U^{s,m}_{p},
\end{align*} 
which approximate
$\displaystyle\int_0^{t^n}\int_{\R}\Theta^{s,k}(x_{i+1/2}-\xi,t^n-\tau) U^{s,\D}(\tau,\xi )\d \xi \d \tau$ and 
$\displaystyle\int_0^{t^n}\int_{\R}\Upsilon^{s,k}(x_{i+1/2}-\xi,t^n-\tau) U^{s,\D}(\tau,\xi )\d \xi \d \tau$
respectively. 
 Further, $\Theta_p^{j,k,s}=\mu^{j,k,p} \Gamma^{j,k,s}$ and  $\Upsilon_p^{j,k,s}=\eta^{j,k,p} \theta^{j,k,s},$ where $\mu^{j,k,p},\eta^{j,k,p}$ are the  integral averages of $\mu^{j,k},\eta^{j,k}$ over $C_p$,  and $\Gamma^{j,k,s},\theta^{j,k,s}$ are the integral averages of $\Gamma^{j,k},\theta^{j,k}$ over $C_s$, respectively.
In general, $\mathcal{F}^k$ can be defined as an appropriate nonlocal extension of any monotone numerical flux, meant for local conservation laws, for example,
\begin{equation*}
  \mathcal{F}^k(a_1,a_2,a_3)
   = 
  \frac{a_1}{2}\Big( f^k(a_2)
    +
    f^k(a_3)\Big)
  -
  \beta\frac{(a_3-a_2)}{2\, \lambda}, \beta\in (0,2/3),
\end{equation*} is an extension of Lax-Friedrich's flux. This flux will be used in the sequel,
where  $\Delta t$ is chosen in order to satisfy
the CFL condition
\begin{equation}\label{CFL_LF}
   \lambda \le \frac{\min(1, 4-6\beta,6\beta,1-\Delta t\abs{\boldsymbol{R}}_{(\lip(\R^{2N}))^N})}{1+6\abs{\boldsymbol{f}}_{(\lip(\R))^N}\norma{\boldsymbol{\nu}}_{(L^\infty(\R^N))^N}}.\end{equation}
 We omit the proof of convergence, which can be established via a source-splitting argument combining the results of \cite{ACG2015,AHV2024,AV2026}. However, we state the precise result below for the sake of completeness. {\begin{theorem}[Convergence]\label{convergence}
Assume that (\textbf{H1})--(\textbf{H4}) hold. As $\Delta x \rightarrow 0$, the approximations $\boldsymbol{U}^{\D}$ 
generated by the  marching formula \eqref{scheme2} converge in $(L^1_{\loc}(\overline{Q}_T))^N$ and pointwise a.e.~in $\overline{Q}_T$ to the entropy solution 
	$\boldsymbol{U}^{\D}\in (L^{\infty}({\overline{Q}_T}))^N \cap (C([0,T];L^1(\R{;[0,1]})))^N$ of the Cauchy problem \eqref{nlm}-\eqref{init}
  with initial data $\boldsymbol{U}_0 \in ((L^1 \cap \operatorname{BV})(\R{;[0,1]}))^N$.
\end{theorem}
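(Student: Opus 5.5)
The plan is to follow the standard compactness route for monotone schemes applied to nonlocal conservation laws, treating the source by splitting. The scheme \eqref{scheme2} is then viewed as a nonlocal Lax--Friedrichs step followed by an explicit Euler step for the source. Convergence to the entropy solution of Definition~\ref{def:entropy} comes from a priori bounds, Helly's theorem, and a Lax--Wendroff type argument. Uniqueness of the limit follows from Theorem~\ref{theorem:stability}, which in turn gives convergence of the whole family.

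\textbf{Step 1: a priori estimates.} Under the CFL condition \eqref{CFL_LF}, I will first show that $H^k$ is nondecreasing in its $U$-arguments, as in \cite{ACG2015,AV2026}. Monotonicity, together with $f^k(0)=f^k(1)=0$ and the bound $\Delta t\abs{\boldsymbol{R}}_{\lip}<1$, should give a uniform $L^\infty$ bound. For the source I plan to use only that $R^k$ is Lipschitz with $R^k(\boldsymbol 0,\boldsymbol 0)=0$, which yields $\norma{\boldsymbol U^{\Delta}}_\infty\le e^{CT}$ rather than strict invariance of $[0,1]$. Next I will prove an $L^1$ bound of the form $\norma{\boldsymbol U^{n}}_{L^1}\le (1+C\Delta t)\norma{\boldsymbol U^{n-1}}_{L^1}$, using conservation of the transport part and the estimates $\norma{\boldsymbol d^{n}}_{L^1}\le \norma{\boldsymbol\eta}_{L^1}\norma{\boldsymbol\theta}_\infty\Delta t\sum_m \norma{\boldsymbol U^m}_{L^1}$. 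The memory term yields a discrete Gronwall inequality that still has a uniform bound on $[0,T]$.

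\textbf{Step 2: total variation bound (main obstacle).} This step requires differencing the scheme in $i$. The transport part contributes $\TV(U^{k,n-1})$ plus terms involving $\nu^k(\boldsymbol c_{i+1/2})-\nu^k(\boldsymbol c_{i-1/2})$ and the second differences of $\nu^k(\boldsymbol c)$. These are bounded by $\Delta x\,\norma{\nabla\nu}\norma{\boldsymbol\mu'}_\infty\norma{\boldsymbol\Gamma}_\infty T\sup_m\norma{\boldsymbol U^m}_{L^1}$ and by analogous terms involving the Hessian, following \cite{AHV2024,AV2026}. The memory enters only through the time sum, which is bounded by the $L^1$ estimate of Step 1. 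For the source I would use
\[
\sum_i\abs{R^k(\cdot)_{i+1}-R^k(\cdot)_i}\le \abs{R^k}_{\lip}\Big(\sum_{s}\TV(U^{s,n-1})+\sum_i\norma{\boldsymbol d^{n-1}_{i+1}-\boldsymbol d^{n-1}_{i}}\Big),
\]
where the last sum is controlled via $\TV$ of the $U^{s,m}$ and $\norma{\boldsymbol\eta}_{L^1}$. Summing over $k$ then gives $\TV(\boldsymbol U^n)\le(1+C\Delta t)\TV(\boldsymbol U^{n-1})+C\Delta t\sum_{m<n}\Delta t\,\TV(\boldsymbol U^m)$. Discrete Gronwall will close this, and the main work is keeping the coupling constants uniform. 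Time $L^1$-Lipschitz continuity then follows directly from the scheme and the TV bound.

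\textbf{Step 3: compactness and entropy inequality.} By Helly's theorem and the Kolmogorov criterion, a subsequence converges in $L^1_{\loc}$ and almost everywhere. Since the kernels are bounded and $\boldsymbol U^\Delta$ is bounded in $L^1$, the convolutions $\boldsymbol c,\boldsymbol d$ converge uniformly on compact sets to $(\boldsymbol\Theta\circledast\boldsymbol U)^k$ and $(\boldsymbol\Upsilon\circledast\boldsymbol U)^k$. I will derive a discrete cell entropy inequality using the Crandall--Majda numerical entropy flux, obtained by applying monotonicity to $a\vee\alpha$ and $a\wedge\alpha$. The nonlocal velocity leaves a residual term $-\lambda f^k(\alpha)\sgn(U_i-\alpha)(\nu(\boldsymbol c_{i+1/2})-\nu(\boldsymbol c_{i-1/2}))$, and the source contributes $\Delta t\,\sgn(U_i^{n}-\alpha)R^k$ up to $O(\Delta t^2)$. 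Multiplying by $\phi$, summing, and passing to the limit should recover \eqref{kruz2}. Finally, Theorem~\ref{theorem:stability} gives uniqueness, so the full family converges.
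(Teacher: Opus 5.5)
Your outline is essentially the argument the paper has in mind. The paper omits the proof and only points to a source-splitting argument that combines the monotonicity, $L^\infty$/$L^1$/TV and time-continuity estimates, Helly compactness and Lax--Wendroff-type entropy passage of \cite{ACG2015,AHV2024,AV2026}, with uniqueness (and hence convergence of the whole family) coming from Theorem~\ref{theorem:stability}. Two minor remarks: \eqref{scheme2} evaluates $R^k$ at the old level $n-1$, so it is an additive update rather than a literal transport-then-Euler composition (your estimates, e.g.\ via $\abs{a+b-\alpha}\le\abs{a-\alpha}+\sgn(a+b-\alpha)\,b$, already treat it that way), and your exponential $L^\infty$ bound in place of invariance of $[0,1]$ is the appropriate one under \ref{H4A} alone and matches Theorem~\ref{tC}.
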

The above theorems imply that the entropy solution satisfies the following regularity estimates.
\begin{theorem}[Regularity of entropy solutions]\label{tC}
For $0 \le t \le T$, the IVP~\eqref{nlm}--\eqref{init} admits a unique entropy solution $\boldsymbol{U}$ satisfying the following:
\begin{align*}
U^k(t,\cdot)& \geq 0,
    \quad k \in \mathcal{N}, \\
    \|\boldsymbol{U}(t,\cdot)\|_{(L^\infty(\mathbb{R}))^N} 
   & \leq e^{\mathcal{C}\,t\left(1+\|\boldsymbol{U}_0\|_{(L^1(\mathbb{R}))^N}\right)} 
    \|U_0\|_{(L^\infty(\mathbb{R}))^N}, \\
\|\boldsymbol{U}(t,\cdot)\|_{(L^1(\R))^N} &= \|\boldsymbol{U}_0\|_{(L^1(\R))^N},\\
\sum_{k=1}^N \TV(U^k(t,\cdot)) 
&\le e^{\mathcal{C}_{TV} t} \sum_{k=1}^N \TV(U_0^k) + \big(e^{\mathcal{C}_{TV} t} - 1\big),\\
\|\boldsymbol{U}(t_2,\cdot) - \boldsymbol{U}(t_1,\cdot)\|_{(L^1(\R))^N} 
&\le N \mathcal{C}_L |t_2 - t_1|, \quad 0 \le t_1, t_2 \le T,
\end{align*}
$\mathcal{C},C_{TV}$, $C_L$ depends on $|f|_{(\mathrm{Lip}(\mathbb{R}))^N}$, 
$\|\nu\|_{(L^\infty(\mathbb{R}^N))^N}$, 
$\|\mu'\|_{(L^\infty(\mathbb{R}))^{N^2}}$, 
$\|\Gamma\|_{(L^\infty(\mathbb{R}^+))^{N^2}}$, 
and $|R|_{(\mathrm{Lip}(\mathbb{R}^{2N}))^N}$.
\end{theorem}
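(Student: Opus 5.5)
The plan is to derive every estimate at the discrete level for the marching formula \eqref{scheme2}, uniformly in $\Delta x$, and then pass to the limit using Theorem~\ref{convergence}. Uniqueness follows directly from Theorem~\ref{theorem:stability} with identical data, since then all the perturbation terms vanish and the Gronwall bound forces $\boldsymbol U=\boldsymbol V$. The first step is \emph{positivity}. Under the CFL condition \eqref{CFL_LF}, the map $H^k$ is nondecreasing in each of its three state arguments. This is the standard Lax--Friedrichs computation, where $\beta\in(0,2/3)$ and the factor $1+6\abs{\boldsymbol f}\norma{\boldsymbol\nu}$ absorb the $\nu$-weighted flux differences. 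Since $f^k(0)=0$, $H^k$ maps zero states to zero. For the source we write $R^k(\boldsymbol U,\boldsymbol d)-R^k(\boldsymbol 0,\boldsymbol 0)$ and bound it below by $-\abs{R^k}_{\lip}(\cdots)$. The term $1-\Delta t\abs{\boldsymbol R}_{\lip}$ in the CFL condition then keeps $U_i^{k,n}\ge0$ inductively, given that $\boldsymbol d\ge0$ because the kernels are nonnegative and $U\ge0$.

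The second step is the $L^\infty$ bound. Monotonicity gives $U^{k,n}_i\le \max_j U^{k,n-1}_j(1+\lambda\abs{f^k}_{\lip}\abs{\nu(c_{i+1/2})-\nu(c_{i-1/2})}/\ldots)$. We estimate $\abs{\nu^k(\boldsymbol c_{i+1/2})-\nu^k(\boldsymbol c_{i-1/2})}\le \Delta x\,\norma{\nabla\nu}\norma{\mu'}\norma{\Gamma}\,t^n\norma{\boldsymbol U}_{L^1}$ and bound the source by $\abs{R}_{\lip}(\norma{U}_\infty+\norma{\Upsilon}\,t\,\norma{U}_{L^1})$. This produces a recursion $M^n\le (1+\mathcal C\Delta t(1+\norma{\boldsymbol U_0}_{L^1}))M^{n-1}$, hence the exponential bound. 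For the \emph{$L^1$ identity}, summing the scheme over $i$ makes the fluxes telescope, so conservation holds exactly when the source integrates to zero. The stated equality is therefore only correct in the source-free case or when $\int R^k=0$ (e.g.\ exchange terms). Otherwise the same summation yields $\norma{\boldsymbol U(t)}_{L^1}\le e^{\mathcal C t}\norma{\boldsymbol U_0}_{L^1}$, and I will record the equality under that proviso.

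The third step, and the main obstacle, is the \emph{TV bound}. I will follow \cite{ACG2015,AHV2024}. Write $\Delta U^{k,n}_{i+1/2}$ and expand $H^k$ as an incremental form, splitting into (a) the local monotone part, which is TV-nonincreasing under the CFL condition, and (b) the terms carrying differences of $\nu^k(\boldsymbol c)$. The difficulty is (b). It needs first differences $\nu(\boldsymbol c_{i+1/2})-\nu(\boldsymbol c_{i-1/2})=O(\Delta x)$ and second differences $\nu(\boldsymbol c_{i+3/2})-2\nu(\boldsymbol c_{i+1/2})+\nu(\boldsymbol c_{i-1/2})=O(\Delta x^2)$ weighted by $f(U)$. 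These are controlled through $\norma{\nabla^2\nu}$, $\norma{\mu'}$, $\norma{\Gamma}$ and the $L^1$ bound, using that the time-sum in $\boldsymbol c$ is bounded by $T\norma{\Gamma}_\infty\sup_m\norma{U^m}_{L^1}$. The source contributes $\Delta t\abs{R}_{\lip}(\TV(\boldsymbol U^{n-1})+\TV(\boldsymbol d))$, with $\TV(\boldsymbol d)\le \norma{\eta}_{\bv}\norma{\theta}\,T\,\norma{U}_{L^1}$. Summing over $k$ gives $\TV^n\le(1+\mathcal C_{TV}\Delta t)\TV^{n-1}+\mathcal C_{TV}\Delta t$, which iterates to the stated bound $e^{\mathcal C_{TV}t}\TV_0+(e^{\mathcal C_{TV}t}-1)$.

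The last step is \emph{time continuity}. From the scheme, $\sum_i\Delta x\abs{U^{k,n}_i-U^{k,n-1}_i}\le \Delta t(\abs{f}\norma{\nu}\TV+\text{nonlocal }O(1)+\abs{R}_{\lip}\cdot L^1\text{ terms})$, which gives $\mathcal C_L$. Summing over $k$ gives the factor $N$. All discrete bounds are uniform in $\Delta$. Helly/Kolmogorov compactness and Theorem~\ref{convergence} then transfer them to the entropy solution via lower semicontinuity of $\TV$ and a.e.\ convergence, and uniqueness identifies the limit.
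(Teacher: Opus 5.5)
Your overall route (uniform discrete estimates for \eqref{scheme2}, passage to the limit via Theorem~\ref{convergence}, uniqueness from Theorem~\ref{theorem:stability} with identical data) is the one the paper intends. The paper only asserts that the estimates follow from those two theorems and defers the scheme bounds to a source-splitting argument. Your objection to the $L^1$ identity is justified.

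The genuine gap is positivity. Lipschitz continuity only gives $R^k(\boldsymbol U_i,\boldsymbol d_i)\ge-\abs{R^k}_{\lip}\big(\norma{\boldsymbol U_i}+\norma{\boldsymbol d_i}\big)$. This bound involves $U^j_i$ for $j\ne k$ and $\boldsymbol d_i$, none of which is controlled by $U^{k,n-1}_i$. At a cell where $U^{k,n-1}$ vanishes on the whole stencil, $H^k$ returns $0$ (since $f^k(0)=0$), so $U^{k,n}_i=\Delta t\,R^k(\dots)$, which is negative whenever the source is. The factor $1-\Delta t\abs{\boldsymbol R}_{\lip}$ can only absorb a lower bound of the form $-L\,U^{k,n-1}_i$. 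Your appeal to $\boldsymbol d\ge0$ does not help: it would need $\eta^{j,k}\ge0$, which \ref{H3A} does not assume (only the time kernels are nonnegative), and a merely Lipschitz $R^k$ carries no sign information anyway. In fact positivity is false under \ref{H1A}--\ref{H4A}. Take $N=2$, $R^1(\boldsymbol U,\boldsymbol A)=-U^2$, $R^2\equiv0$, $U^1_0\equiv0$ and $0\le U^2_0\not\equiv0$. Integrating the equations in $x$ gives $\int_{\R}U^1(t,x)\,dx=-t\int_{\R}U^2_0(x)\,dx<0$ for $t>0$.

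To close the step you need a structural (quasi-positivity) hypothesis, such as $R^k(\boldsymbol U,\boldsymbol A)\ge-L\,U^k$ for all $\boldsymbol U\in[0,\infty)^N$ and all $\boldsymbol A$. The lane-changing source \eqref{eq:Rk}--\eqref{eq:Sk} satisfies this with $L$ any bound on $\abs{g^2\nu^2-g^1\nu^1}$. With $\nu_\pm:=\nu^k(\boldsymbol c^{k,n-1}_{i\pm1/2})$, monotonicity then gives $U^{k,n}_i\ge H^k(\nu_-,\nu_+,0,U^{k,n-1}_i,0)-L\Delta t\,U^{k,n-1}_i\ge\big(1-\beta-\lambda\abs{f^k}_{\lipR}\norma{\nu^k}_{L^\infty}-L\Delta t\big)U^{k,n-1}_i\ge0$ for $\Delta t$ small. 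Your $L^1$ proviso needs this too. The right condition is $\sum_kR^k\equiv0$ (exchange terms do not have $\int R^k=0$ componentwise). Summation then conserves $\sum_k\int U^k$, which equals $\norma{\boldsymbol U}_{(L^1)^N}$ only once positivity is known.

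Two smaller repairs are needed. First, in the $L^\infty$ step, bounding $\boldsymbol d$ through $\norma{\boldsymbol U}_{L^1}$ produces an additive term, so the multiplicative recursion you write does not follow. Use instead $\abs{\eta*U}\le\norma{\eta}_{L^1}\norma{U}_{L^\infty}$ and iterate on $\max_{m\le n}M^m$. Second, that bound, your bound on $\TV(\boldsymbol d)$, and the $L^1$ control of the source all need $\eta^{j,k}\in W^{1,1}(\R)$, which \ref{H3A} does not supply. For example, with $\eta^{1,1}\equiv\theta^{1,1}\equiv1$ and $R^1=A^1$, the source is a positive constant in $x$ for $t>0$, so $U^1(t,\cdot)\notin L^1$.
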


\section{Numerical Experiments}\label{num1}
We now present 
\begin{figure}[h!]
\centering
\includegraphics[width=0.8\textwidth,keepaspectratio]{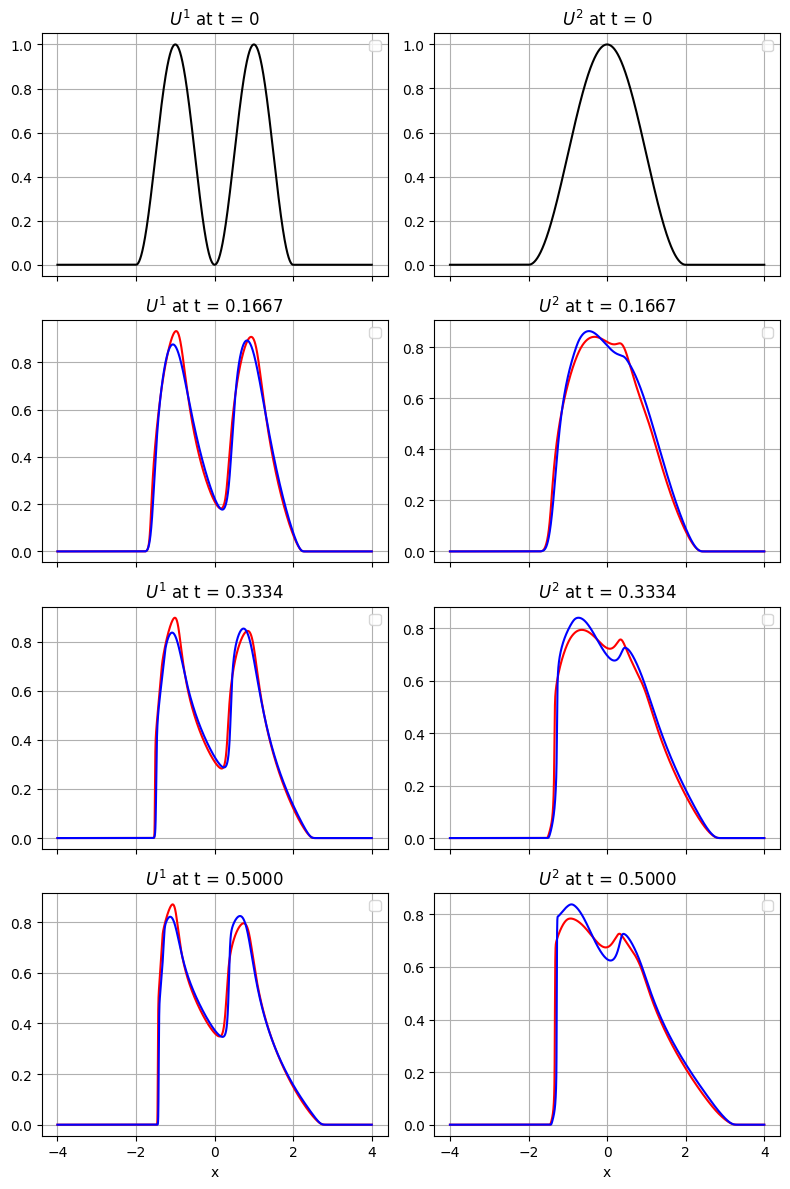}
\caption{Comparison of solutions to \eqref{nlm},\eqref{eq:ex1} ({\color{blue}{---}}) and its nonlocal-space-only counterpart({\color{red}{---}}) at times $t = 0.01667,\,0.3334,\,0.5$.}
\label{fig:ex211}
\end{figure}
\begin{figure}[h!]
\centering
\includegraphics[width=0.8\textwidth,keepaspectratio]{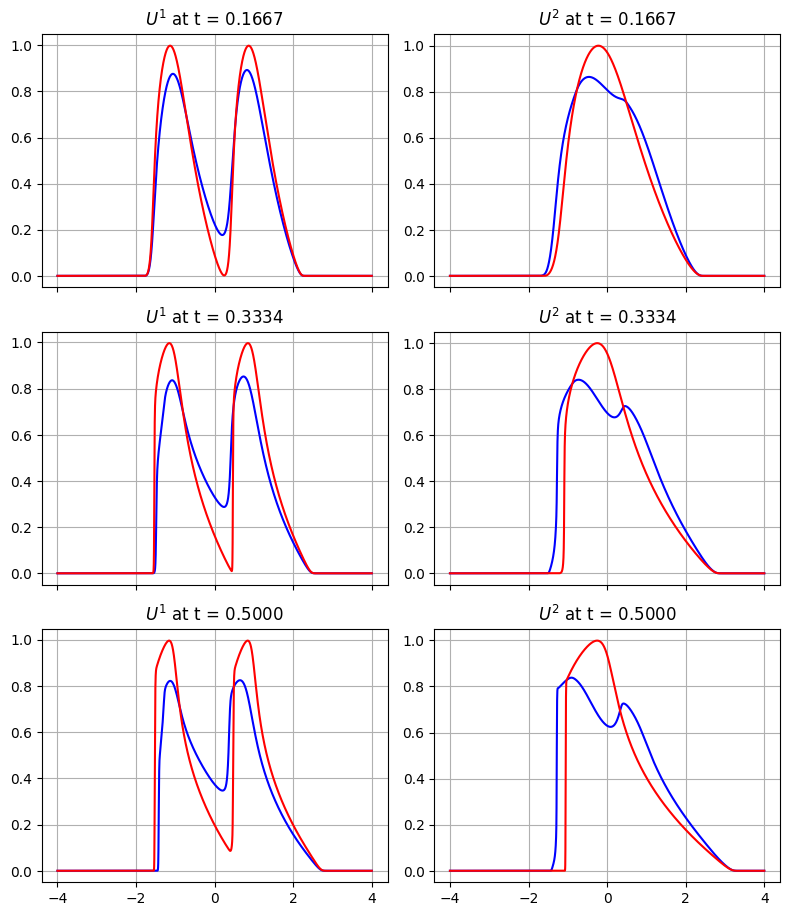}
\caption{Comparison of solutions to \eqref{nlm},\eqref{eq:ex1}({\color{blue}{---}})and its no source counterpart({\color{red}---}) at times $t = 0.01667,\,0.3334,\,0.5$.}
\label{fig:ex2111}
\end{figure}numerical experiments to illustrate the theoretical results of the previous section. In particular, we study qualitative features of solutions, and stability with respect to perturbations of model parameters. 
We compute numerical approximations of \eqref{nlm} on the domain $[-4,4]$ up to the final time $T=0.5$ for $N=2$, using the scheme \eqref{scheme2}. The parameters $\beta = 0.333433$, and $\lambda = 0.1286$ are selected to satisfy the CFL condition \eqref{CFL_LF}. Further, 
$f^1(x)=f^2(x)=xg(x),\nu^1(x)=1.5g(x)$and $
\nu^2(x)=2.5g(x)$ where $g(x)=1-x$.
The spatial and temporal kernels are given by
\[
\mu^{j,k}(x)=\mu(x)=L_1(\delta_x-x)^3\mathbbm{1}_{(0,\delta_x)}(x), \qquad 
\Gamma^{j,k}(t)=\Gamma(t)=L_2(\delta_t-t)^2\mathbbm{1}_{(0,\delta_t)}(t),\ 1\le j,k\le 2,\]
with normalization constants $L_1$ and $L_2$ satisfying
$
\displaystyle\int_{\mathbb{R}} \mu(x)\,dx = 1=
\displaystyle\int_{\mathbb{R}^+} \Gamma(t)\,dt.$  The initial data are
\begin{align}\label{eq:ex1}
U^1_0(x)=\sin^2(0.5\pi x)\mathbbm{1}_{(-2,2)}(x), \qquad
U^2_0(x)=\cos^2(0.25\pi x)\mathbbm{1}_{(-2,2)}(x).
\end{align}
The source term is defined by
\begin{align}\label{eq:Rk}
R^k(\boldsymbol{U},\boldsymbol{U} \circledast \boldsymbol{\Theta})
= S^{k-1}(U^{k-1},U^k,U^{k-1}*\Theta,U^k*\Theta)
- S^k(U^k,U^{k+1},U^k*\Theta,U^{k+1}*\Theta),
\end{align}
where $S^0=S^2=0$ and
\begin{align}\label{eq:Sk}
S^1(a,b,A,B)
= \big(g^2(b)\nu^2(B)-g^1(a)\nu^1(A)\big)^+ a
- \big(g^2(b)\nu^2(B)-g^1(a)\nu^1(A)\big)^- b.
\end{align}This model represents a two-lane unidirectional traffic flow in which the second lane corresponds to faster vehicles, adapted from \cite{AHV2023,HR2019}. The spatial nonlocality models driver anticipation of downstream density, while the temporal convolution in the flux accounts for memory effects in driver response, such as delayed braking or acceleration.

The discretization parameters are $\Delta x = 7.8125 \times 10^{-3},\ \delta_x = 0.078125$, and $\delta_x = 0.4$. Figure \ref{fig:ex211} compares the space–time nonlocal model \eqref{nlm} (blue) with its memoryless counterpart (red). At early times, the memory solution is smoother, reflecting temporal averaging. By $t = 0.5$, it develops sharper fronts, while the memoryless solution diffuses. This reversal shows that accumulated memory feeds past gradients back into the flux, leading to nontrivial dynamics beyond simple smoothing. This behavior is consistent with the nonlinear dependence of the flux on the temporally averaged state.
\begin{figure}[h!]
\centering
\includegraphics[width=0.8\textwidth,keepaspectratio]{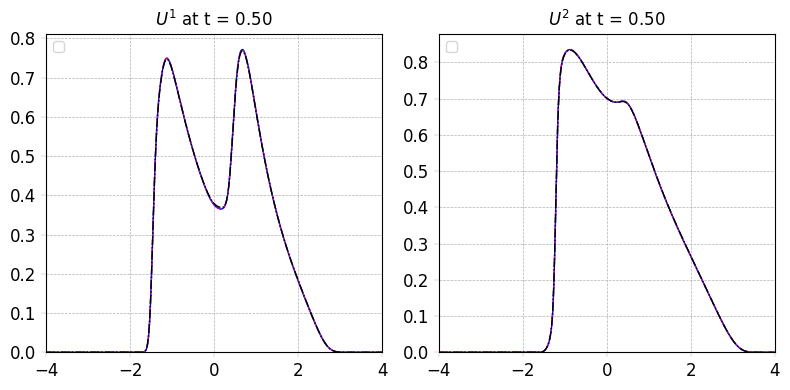}
\caption{Solutions to \eqref{nlm}, \eqref{eq:ex1} corresponding to decreasing perturbation parameter $\epsilon$, illustrating convergence toward the reference solution with perturbation in $\mu$.  $\epsilon=$ 
$1$({\color{black}\chainn})
 $1/2$({\color{red}\chainn}),
$1/4$({\color{magenta}\chainn}), 
$1/8$({\color{magenta}\dashed}), $1/32$({\color{green}\dotted})}.
\label{fig:ex32}
\end{figure}
Figure \ref{fig:ex2111} illustrates the effect of the source term $R^k$ by comparing the full model \eqref{nlm} against the source-free system obtained by setting $R^k=0$. The source term, defined in \eqref{eq:Rk}-\eqref{eq:Sk}, models inter-lane exchange driven by the relative flux differential between the two lanes. Its inclusion produces markedly different dynamics: the two solution components separate more rapidly, the shock in $U^1$ forms earlier and at a different location, and the amplitude of $U^2$ is visibly reduced by $T = 0.5$. This confirms that the source term plays a structurally significant role in the coupled system and cannot be treated as a lower-order perturbation.

We now examine the sensitivity 
 of the solution with respect to perturbations in the model parameters and illustrate the theory presented in the previous section.  
To isolate the effect of each parameter, we perturb one coefficient at a time while keeping all other parameters fixed. In the following experiments, let $\Delta x=0.00625, \eta_x=0.5$ and $\eta_t= 0.8.$ We first consider perturbations of the spatial interaction kernel by defining
\[
\mu_\epsilon(x) = C_\epsilon\, \mu(x)\left(1 + \epsilon \frac{x}{\delta_x}\right)\]
for $\epsilon = 2^{-s}$, $s=1,\dots,6$, where $C_\epsilon$ is chosen such that 
$\displaystyle\int_{\mathbb{R}} \mu_\epsilon(x)\,dx = 1$.
Let $\boldsymbol{U}_\epsilon(T,\cdot)$ denote the corresponding numerical solution at time $T=0.5$ and let $\boldsymbol{U}(T,\cdot)$ denote the reference solution associated with the unperturbed kernel $\mu$.  \begin{figure}[h!]
  \centering \noindent\begin{minipage}{0.46\textwidth}
    \centering
    \begin{tabular}{|c|c|c|c|c|c|c|c|c|c|}\hline
     \multicolumn{1}{|c|}{ $ \epsilon$}&\multicolumn{1}{|c|}{$\displaystyle\frac{e_{\epsilon}(T)}{1000}$}\vline & \multicolumn{1}{|c|}{$\alpha$}\vline\\
     \hline
     $1/2$&$2.08$&\tabularnewline
     \hline
     $1/4$&$1.09$&$0.94$\tabularnewline
     \hline
     $1/8$&$0.56$&$0.97$\tabularnewline
     \hline
     $1/16$&$0.28$&$0.98$\tabularnewline
     \hline
     $1/32$&$0.14$&$0.99$\tabularnewline
     \hline
     $1/64$&$0.0071$&$1.00$\tabularnewline
     \hline
 \end{tabular}
  \end{minipage}
\noindent\begin{minipage}{0.4\textwidth}
\includegraphics[width=.8\textwidth, trim = 40 25 20 5]{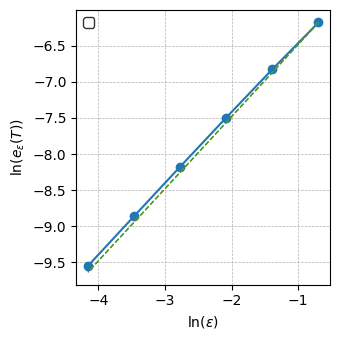}
  \end{minipage}  \caption{Convergence rate $\alpha$ for the decreasing perturbation dynamics with perturbation in $\mu$. Observed convergence rate({\color{blue}\oline}), theoretical convergence rate  ({\color{darkgreen}\dashed}).}\label{fig:ex21}
\end{figure}We measure the deviation using the $L^1$-error
$
e_\epsilon(T) = \|\boldsymbol{U}_\epsilon(T,\cdot) - \boldsymbol{U}(T,\cdot)\|_{(L^1(\mathbb{R}))^N},$
and estimate the convergence rate
$
\alpha = \log_2\!\left(\frac{e_\epsilon(T)}{e_{\epsilon/2}(T)}\right).$ The results, reported in Figure~\ref{fig:ex32}-\ref{fig:ex21}, show that the error decreases approximately linearly with respect to $\epsilon$, with $\alpha \approx 1$. This indicates Lipschitz continuous dependence of the solution on the spatial kernel.

Next, we perturb the velocity functions by defining
\[
g_\epsilon(x) = g(x)\big(1+\epsilon \sin(\pi x)\big), \qquad \epsilon = 2^{-s}, \; s=1,\dots,6,
\]
while keeping $\mu$ and all other parameters unchanged. Denoting the corresponding solution by $\boldsymbol{U}_\epsilon(T,\cdot)$, we again compute the error $e_\epsilon(T)$ and the rate $\alpha$ as above. The numerical results, shown in Figure~\ref{fig:ex22}-\ref{fig:ex31}, exhibit similar behavior, with $e_\epsilon(T)$ decreasing proportionally to $\epsilon$ and $\alpha \approx 1$.

\begin{figure}[ht!]
 \centering
\includegraphics[width=.8\textwidth, trim = 40 25 20 5]{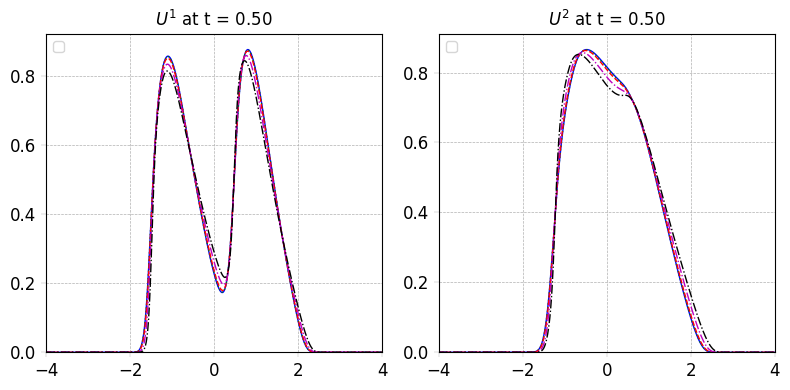}
\caption{Solutions to \eqref{nlm}, \eqref{eq:ex1} corresponding to decreasing perturbation parameter $\epsilon$, illustrating convergence toward the reference solution with perturbation in $g$.  $\epsilon=$ 
$1$({\color{black}\chainn})
 $1/2$({\color{red}\chainn}),
$1/4$({\color{magenta}\chainn}), 
$1/8$({\color{magenta}\dashed}), $1/32$({\color{green}\dotted}).}
  \label{fig:ex22}
\end{figure} \begin{figure}[h!]
  \centering \noindent\begin{minipage}{0.46\textwidth}
    \centering
    \begin{tabular}{|c|c|c|c|c|c|c|c|c|c|}\hline
     \multicolumn{1}{|c|}{ $\epsilon$ }&\multicolumn{1}{|c|}{$\displaystyle\frac{e_{\epsilon}(T)}{10^5}$}\vline & \multicolumn{1}{|c|}{$\alpha$}\vline\\
     \hline
      $1/2$&$7.68$&\tabularnewline
     \hline
     $1/4$&$4.14$&$0.893$\tabularnewline
     \hline
     $1/8$&$2.15$&$0.943$\tabularnewline
     \hline
     $1/16$&$1.10$&$0.971$\tabularnewline
     \hline
     $1/32$&$0.55$&$0.985$\tabularnewline
     \hline
     $1/64$&$0.32$&$0.992$\tabularnewline
     \hline
 \end{tabular}
  \end{minipage}
\noindent\begin{minipage}{0.4\textwidth}
\includegraphics[width=.8\textwidth, trim = 40 25 20 5]{Pictures/Error_g.png}
  \end{minipage}  \caption{Convergence rate $\alpha$ for the decreasing perturbation dynamics with perturbation in $g$. Observed convergence rate({\color{blue}\oline}), theoretical convergence rate  ({\color{darkgreen}\dashed}).}\label{fig:ex31}
\end{figure}

These results confirm the Lipschitz continuous dependence of solutions on model parameters, in agreement with the theoretical stability estimates.  In particular, small perturbations in the coefficients $\mu$ and $g$ lead to proportionally small changes in the solution, confirming the robustness of the model and the numerical scheme. The results with perturbation of the remaining parameters have been observed to have similar results, and are not displayed here.

\bibliographystyle{siam}    
\bibliography{references}
\end{document}